\documentclass[12pt]{amsart}
\usepackage{graphicx}

\usepackage{amsmath,graphics,enumerate}
\usepackage{amsfonts,amssymb}
\usepackage[all]{xypic}
\usepackage{txfonts}

\theoremstyle{plain}
\newtheorem*{theorem*}{Theorem}
\newtheorem*{lemma*} {Lemma}
\newtheorem*{corollary*} {Corollary}
\newtheorem*{proposition*} {Proposition}
\newtheorem*{conjecture*}{Conjecture}

\newtheorem{theorem}{Theorem}[section]
\newtheorem{lemma}[theorem]{Lemma}
\newtheorem{corollary}[theorem]{Corollary}
\newtheorem{proposition}[theorem]{Proposition}
\newtheorem{conjecture}[theorem]{Conjecture}

{\catcode`@=11\global\let\c@figure=\c@theorem}

{\catcode`@=11\global\let\c@equation=\c@theorem}

\theoremstyle{remark}
\newtheorem*{remark}{Remark}

\theoremstyle{definition}

\numberwithin{equation}{section}
\numberwithin{figure}{section}

\textwidth 6in    



\def\be{\begin{equation}}
\def\ee{\end{equation}}

\def\bing{\mbox{Bing}}

\def\Z{\Bbb{Z}}

\def\hJ{\hat{J}}

\def\E{\Bbb{E}}
\def\part{\partial}

\def\dtimes{D^2\hspace{-0.07cm}\times\hspace{-0.07cm} [0,1]}
\def\bp{\begin{pmatrix}}
\def\ep{\end{pmatrix}}
\def\bn{\begin{enumerate}}

\def\en{\end{enumerate}}
\def\ba{\begin{array}}
\def\ea{\end{array}}

\def\fr12{\frac{1}{2}}

\def\int{\mbox{Int}}
\def\dcup{\,\cup \,}


\def\into{\hookrightarrow}


\def\v{\varphi}

\def\longisoarrow{\xrightarrow{\,\,\cong\,\,}}


\def\sm{\smallsetminus}


\begin{document}

\title{New constructions of slice links}
\author{Tim Cochran, Stefan Friedl and Peter Teichner}

\address{Rice University, Houston, TX 77005}
\email{cochran@rice.edu}
\address{Universit\'e du Qu\'ebec \`a Montr\'eal, Montr\'eal, Qu\`ebec}
\email{friedl@alumni.brandeis.edu}
\address{University of California, Berkeley, CA 94720}
\email{teichner@math.berkeley.edu}
\def\subjclassname{\textup{2000} Mathematics Subject Classification}
\expandafter\let\csname subjclassname@1991\endcsname=\subjclassname
\expandafter\let\csname subjclassname@2000\endcsname=\subjclassname
\subjclass{Primary 57M25}

\date{\today}
\begin{abstract}
We use techniques of Freedman and Teichner \cite{FnT95} to prove that under certain circumstances the multi-infection of a slice link is again slice (not necessarily smoothly slice). We provide a general context for proving links are slice that includes many of the previously known results.
\end{abstract}
\maketitle

\section{Introduction}\label{sec:intro}
A {\em link of $m$ components} is the image of a flat embedding $S^1\amalg\dots\amalg S^1 \into S^{3}$ of the ordered disjoint union of $m$ oriented copies of the circle $S^1$. Two such links are called {\em concordant} if there exists a flat embedding 
\[
(S^{1}\amalg \dots \amalg S^{1}) \times [0,1] \into S^{3}\times [0,1]
\]
 which restricts to the given links at the ends.  A link is called (topologically) {\em slice} if it is concordant to
the trivial $m$--component link or, equivalently, if it bounds a flat embedding of $m$ disjoint {\em slice} disks $D^{2}\amalg\dots\amalg D^2 \into D^{4}$. In the special case $m=1$ we refer to the link as a knot. If the embeddings above are required to be $C^\infty$, or {\em smooth}, then these notions are called \emph{smoothly concordant} and \emph{smoothly slice}.

The study of link concordance was initiated by Fox and Milnor in the early $1960's$ arising from their study of isolated singularities of $2$-spheres in $4$-manifolds. It is now known that specific questions about link concordance are equivalent to whether or not the surgery and s-cobordism theorems (that hold true in higher dimensions) hold true for topological $4$-manifolds. Moreover, the difference between a link being topologically slice and being smoothly slice can be viewed as ``atomic'' for the existence of multiple differential structures on a fixed topological $4$-manifold.

There is only one known way to construct a smoothly slice link, namely as the boundary of a set of \emph{ribbon disks} ~\cite{Ro90}. The known constructions of (topologically) slice links are also fairly limited. In $1982$ Michael Freedman proved that any knot with Alexander polynomial $1$ is slice ~\cite{Fr85}. It is known that some of these knots cannot be smoothly slice and hence cannot arise from the ribbon construction. Freedman ~\cite{Fr85,Fr88} and later Freedman and Teichner ~\cite{FnT95} gave other techniques showing that the \emph{Whitehead doubles} of various links are slice.
The $4$-dimensional surgery and s-cobordism theorems (for all fundamental groups) are in fact equivalent to the {\em free} sliceness of Whitehead doubles of all links with vanishing linking numbers, see \cite{FQ90}. Here a link is {\em freely} slice if the complement of some set of slice disks in $D^4$ has free fundamental group. However, it is conjectured that:

\begin{conjecture}\label{conj}
The Whitehead double of a link is freely slice if and only if the link is homotopically trivial (i.e.\ has vanishing non-repeating Milnor $\bar\mu$-invariants).
\end{conjecture}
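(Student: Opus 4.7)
The plan is to attack the two implications separately, since they require very different techniques. For the ``only if'' direction, suppose $\mathrm{Wh}(L)$ is freely slice with slice disks $D = D_1 \amalg \cdots \amalg D_m$ in $D^4$ whose exterior $X = D^4 \setminus \nu D$ has $\pi_1(X)$ free on the $m$ meridians. The inclusions $S^3 \setminus \nu L \hookrightarrow S^3 \setminus \nu \mathrm{Wh}(L) \hookrightarrow X$ yield a homomorphism $\pi_1(S^3 \setminus L) \to F_m$ sending the $i$-th meridian of $L$ to (a conjugate of) the $i$-th meridian of $\mathrm{Wh}(L)$. Applying Stallings' theorem together with careful bookkeeping of the Whitehead clasp relations, one argues that the induced maps on nilpotent quotients $\pi_1 / \pi_1^{(n)}$ have free image on the meridians; since the non-repeating Milnor $\bar\mu$-invariants of $L$ are precisely the obstructions to such freeness, they must all vanish, and $L$ is link-homotopically trivial by the classification of link homotopy due to Habegger and Lin.

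For the ``if'' direction, the approach is to use the topological disk embedding machinery of Freedman--Teichner \cite{FnT95}. If $L$ is link-homotopically trivial, there exist pairwise disjoint properly immersed disks $\Delta_1, \ldots, \Delta_m \subset D^4$ with $\partial \Delta_i = L_i$; the only singularities are self-intersections, which come paired by (possibly singular) Whitney disks. The crucial design of the Whitehead double is that the Whitehead clasp inserts commutators of meridians into the relators, so the boundary circles of the Whitney disks for $\mathrm{Wh}(L)$ become null-homotopic in the free completion of the complement. One then builds a capped grope of increasing height on each Whitney disk and attempts to apply a disk embedding theorem to replace this immersed data by embedded flat disks whose complement has free fundamental group.

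The fundamental obstacle---and the reason this remains a conjecture---is that the currently known topological disk embedding theorems only work over \emph{good} groups in the sense of Freedman--Quinn (virtually polycyclic, or subexponential growth), whereas free sliceness demands a free fundamental group. It is a celebrated open problem whether non-abelian free groups are good, and indeed this conjecture is essentially equivalent to the topological $4$-dimensional surgery conjecture for free fundamental groups, hence for all fundamental groups by the reduction of \cite{FQ90}. Any honest attempt must therefore either make progress on the goodness of free groups or exploit very specific geometric features of the Whitehead construction---for example iterating Bing doubling or using higher-order grope towers---to bypass the general surgery problem.
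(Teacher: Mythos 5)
This statement is Conjecture \ref{conj}: the paper does not prove it, and explicitly states that it is widely open for links of three or more components, so there is no proof of record to compare your attempt against. Your text is accordingly a survey of strategies rather than a proof, and it should be assessed as such.

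That said, your sketch of the ``only if'' direction contains a genuine error rather than merely a gap. The homomorphism you invoke, sending the $i$-th meridian of $L$ to a conjugate of the $i$-th meridian of $\mathrm{Wh}(L)$, does not exist in any useful form: each component of $\mathrm{Wh}(L)$ has linking number zero with the corresponding component of $L$ (that is the point of the untwisted double), so the meridian of $L_i$ already dies in $H_1$ of the slice disk exterior, and Stallings' theorem gives no control over its image in the lower central series quotients of the free group. The ``only if'' direction is precisely the one the paper singles out as the harder part; as noted just below the conjecture, proving it (say for the Borromean rings) would show that the $4$-dimensional surgery theorem \emph{fails} for free groups. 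It therefore cannot follow from nilpotent-quotient bookkeeping; the known cases ($m\le 2$, \cite{Fr88}) required the full strength of Freedman's techniques. Relatedly, your closing claim that the conjecture is ``essentially equivalent'' to the surgery conjecture for free groups is backwards: surgery is equivalent to free sliceness of \emph{all} Whitehead doubles of links with vanishing linking numbers, whereas the conjecture asserts free sliceness occurs \emph{only} for homotopically trivial links, i.e.\ it predicts that surgery fails. For the ``if'' direction your description of the obstruction is reasonable, but note that the best known result (\cite[Theorem~3.1]{FnT95}, recovered here as a special case of Theorem~\ref{mainthm}) requires all $\bar\mu$-invariants of length $\le m+1$ to vanish, which is strictly stronger than homotopical triviality (vanishing of the non-repeating ones); closing that gap is exactly the open content of the ``if'' direction.
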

\noindent Since vanishing linking numbers corresponds to vanishing Milnor invariants of length~2, the above conjecture (applied to, say, the Borromean rings) would imply that one of those theorems does not hold for free groups. The conjecture is known for links with one or two components (\cite{Fr88}) but is widely open for all other cases, the harder part being the ``only if'' direction. 
Continuing the history of constructions for slice links, the second two authors recently found a new technique for knots, including examples that are not ribbon knots, do not have Alexander polynomial $1$ and are not Whitehead doubles ~\cite{FlT05,FlT06}.

In the present paper we discuss a method of constructing slice links that generalizes many of the above. The construction begins with a ribbon knot or link and modifies it by a procedure called a \emph{multi-infection}
 (previously called \emph{infection by a string link} ~\cite[p.~385]{C04} and a \emph{tangle sum} ~\cite[Section~1]{CO94}) which generalizes the classical satellite construction. Special cases of this
 construction have been used extensively since the late $1970$'s to exhibit interesting examples of knots and links
  that are \emph{not slice} ~\cite{Gi83,Li05,COT03,COT04,Ha06,Ci06}. Therefore the present paper complements these
  results, giving hope for an eventual complete resolution of the question of when this construction results in a slice knot or link.
Our result also provides a method of producing interesting examples for testing the new
obstructions to a knot or link being smoothly slice \cite{OS03,Ra04,MO05,BW05,GRS07}.

In order to state our main theorem we now define the multi-infection of a link by a string link. By an \emph{$r$--multi--disk} $\E$
we mean the oriented disk $D^2$ together with $r$ ordered embedded open disks $E_1,\dots,E_r$
\begin{figure}[h] \centering
  \includegraphics[scale=0.3]{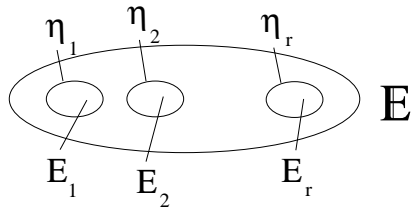}
\caption{Multi--disk.} \label{multi_disk}
\end{figure}
(cf. Figure \ref{multi_disk}). Given a link $L\subset S^3$ we say that a map $\varphi:\E\to S^3$
of an $r$--multi--disk into $S^3$ is {\em proper} if it is an embedding such that the image of the
multi--disk (which we denote by $\E_\varphi$) intersects the link components transversely and only
in the images of the disks $E_1,\dots,E_r$
\begin{figure}[h] \centering
  \includegraphics[scale=0.3]{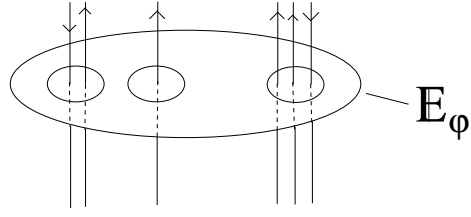}
\caption{Properly embedded multi--disk.} \label{multi_infection}
\end{figure}
as in Figure \ref{multi_infection}. Now let $J=J_1,\dots,J_r\subset D^2\times [0,1]$ be an (unoriented) $r$-component string link. Then we can
thicken up $\E_\varphi\subset S^3$ using the orientation of $\E_\varphi$, and tie $J$ into $L$
along $\E_\varphi$ (cf. Figure \ref{fig:infection}).
\begin{figure}[h] \centering
\begin{tabular}{cc}
  \includegraphics[scale=0.3]{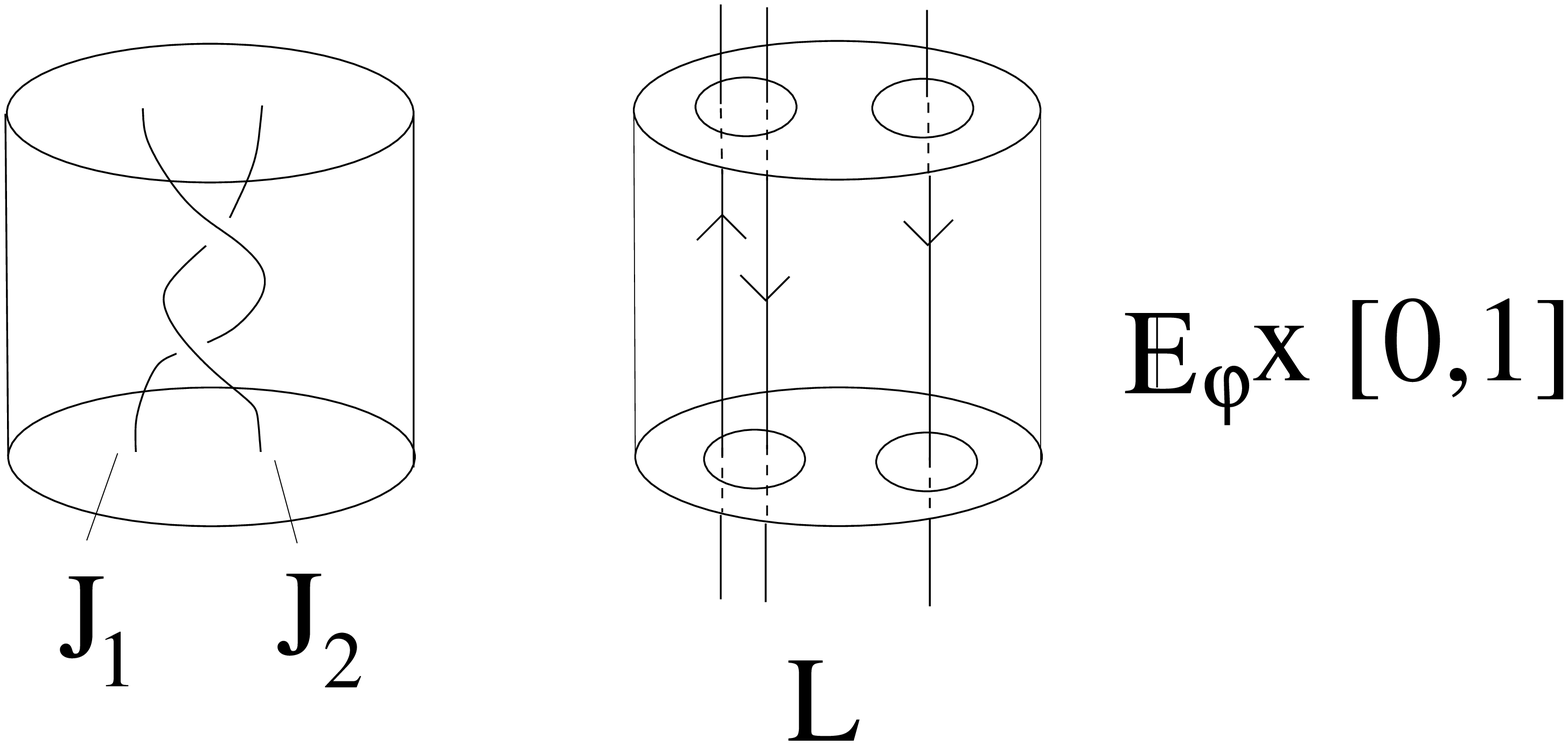}&
\includegraphics[scale=0.3]{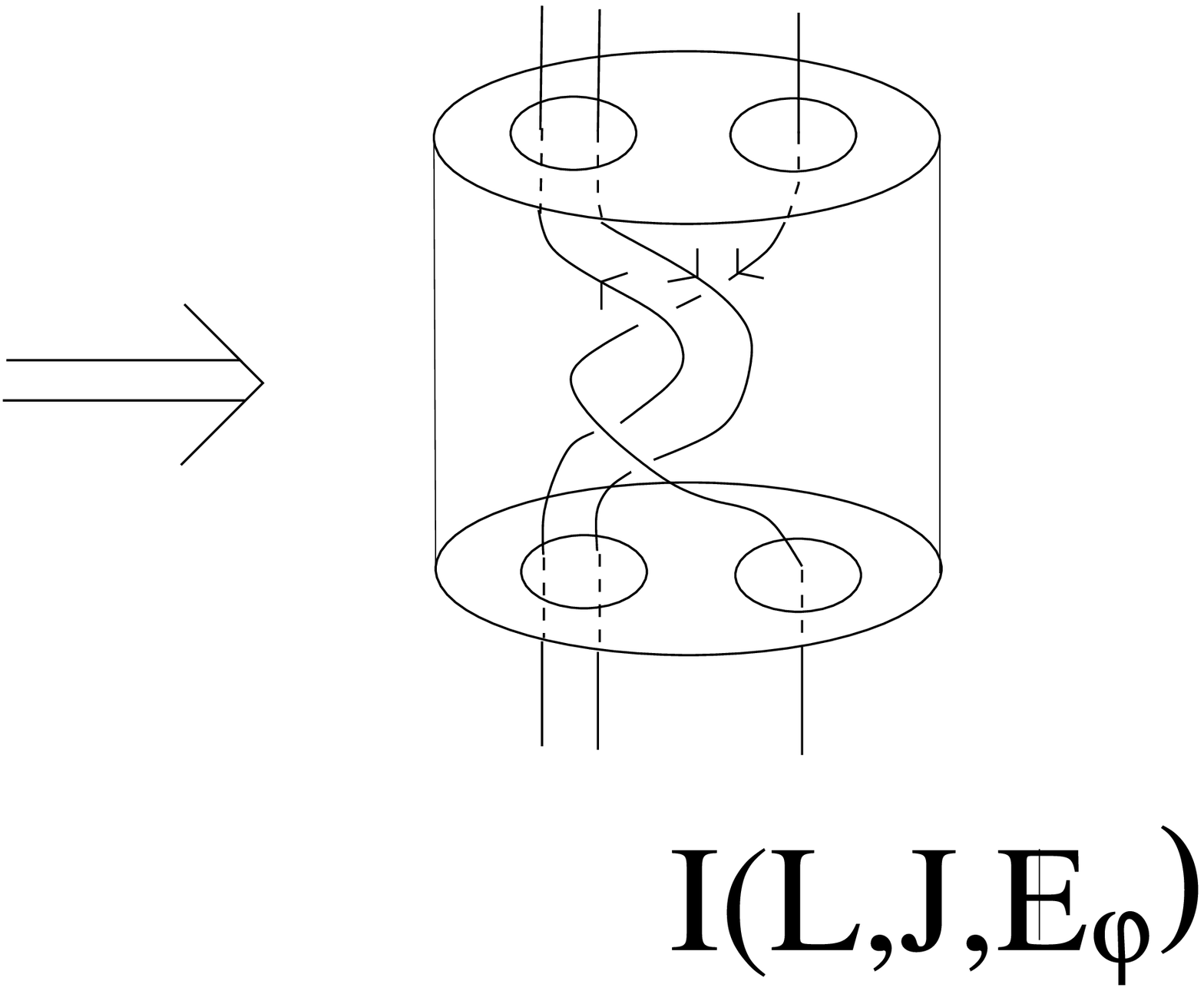}
\end{tabular}
\caption{Multi-infection of $L$ by $J$.} \label{fig:infection}
\end{figure}
We call the resulting link the multi--infection of $L$ by $J$ along $\E_\varphi$ and denote it by
$I(L,J,\E_\varphi)$. We refer to Section \ref{section:bound} for a more formal definition. We will always refer to the image of the boundary curves of
$\varphi(E_1),\dots,\varphi(E_r)$ by $\eta_1,\dots,\eta_r$. Note that in the case $r=1$, the multi--infection of a link $L$ by a string knot $J$ along a 1--multi--disk $\E_\v$ depends only on the curve $\eta$ and on the closure $\hat{J}$. In fact the resulting link is just the satellite link of $L$ with companion $\hJ$ and axis $\eta$, that we denote $I(L,\hat{J},\eta)$.

We can now state our main theorem which is an application of the techniques of \cite{FnT95} (see also \cite{Kr03}). We refer to \cite{Mi57} for the definition of the $\bar{\mu}$--invariants for links. For string links, these can actually be defined without indeterminacy but we will not need that fact here since the vanishing of the $\bar\mu$-invariants up to a certain length is well defined and depends only on the link closure of the string link, compare Figure~\ref{stringlink}.

\begin{theorem} \label{mainthm}
Let $D=D_1\amalg \dots\amalg D_m \hookrightarrow D^4$ be slice disks for a link $L$ in $S^3$.
Let $\varphi:\E\to S^3$ be a proper map of an $r$--multi--disk such that $\eta_1,\dots,\eta_r$ bound a set of immersed disks $\delta_i$ in $D^4\sm D$ in general position. Let $c$ be the total number of intersection and
self--intersection points of the $\delta_i$ and let $J$ be an $r$--component string link with vanishing Milnor $\bar{\mu}$--invariants up to (and including) length $2c$.

Then the multi--infection $I(L,J,\E_\varphi)$ of $L$ by $J$ along $\E_\varphi$  is also slice.
\end{theorem}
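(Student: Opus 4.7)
The strategy is to construct topologically flat slice disks for $L' := I(L, J, \E_\varphi)$ in $D^4$ by performing a four-dimensional analog of the multi-infection on the given slice disks $D$. If the immersed disks $\delta_i$ were in fact pairwise disjoint, embedded, with trivial normal bundles in the exterior $X := D^4 \setminus \nu(D)$, this step would be essentially formal: excise the tubular neighborhoods $\nu(\delta_i) \cong D^2 \times D^2$ from $X$ and reglue with the natural four-dimensional filling of the string-link exterior of $J$ matched to $\E_\varphi$, producing inside a topological $4$-ball a new family of embedded disks bounding $L'$. The real task is therefore to promote the immersed $\delta_i$ (with $c$ total intersection and self-intersection points) to pairwise disjoint topologically flat embedded disks with the correct framings in $X$, and it is here that the Milnor data of $J$ gets consumed.

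The device for doing so is a capped grope assembled from two sides. On the $X$-side, at each of the $c$ double points of the $\delta_i$ attach a Whitney disk; iterating the construction (each new Whitney disk may itself be only immersed) produces a grope structure of depth at most $c$ in $X$ whose top-stage ``caps'' have boundaries parallel to the meridians $\eta_i$. On the $J$-side, a theorem of Cochran (\cite{C04}, see also \cite{COT03}) translates vanishing of $\bar{\mu}$-invariants up to length $2c$ into the existence, inside a 4-ball neighborhood of $\E_\varphi$, of a capped grope of class $2c$ bounded by $J$, whose final-stage cap boundaries also lie on parallel copies of $\eta_i$. Splicing these two capped gropes along $\E_\varphi$ yields a single capped grope of sufficient class for $\eta_i$ in $X$.

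Finally, by Freedman's disk embedding theorem in the form deployed in \cite{FnT95, Kr03}, a capped grope of sufficient class sitting in a topological 4-manifold with ``good'' fundamental group can be replaced by topologically flat, pairwise disjoint, embedded disks with the same boundary and framings. The relevant ambient $\pi_1$ here is a quotient of the free group on the meridians of $L$, hence good in the Freedman--Quinn sense \cite{FQ90}. Feeding the resulting disks $\tilde\delta_i$ into the four-dimensional infection of the first paragraph yields slice disks for $L'$. The principal technical obstacle is the splicing step of the second paragraph: attaching the $J$-side caps to the iterated Whitney tower from the $\delta_i$-side in a framed, $\pi_1$-controlled manner, and verifying that the resulting object satisfies the hypotheses of the disk embedding theorem. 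The count $2c$ (rather than $c$) reflects the fact that each grope stage is a genus-one surface, so each of the $c$ $\delta_i$-intersections needs two commutator levels of Milnor vanishing from $J$ to be cleanly resolved.
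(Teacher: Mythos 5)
Your overall strategy --- promote the $\delta_i$ to disjoint flat embedded disks and then perform the infection on the slice disks in dimension four --- is genuinely different from the paper's, which never embeds the $\delta_i$ at all: it forms $N=W_L\cup_{\mathbb{H}}(M_{\hat{J}}\times[0,1])$ with boundary $M_I\amalg -M_{\hat{J}}$, excises a neighborhood $M_1$ of the collar together with the immersed $\delta_i$, and caps the new boundary component with a $4$--manifold $M_3\simeq\bigvee^c S^1$ obtained by surgery \emph{up to s-cobordism} on a $\pi_1$-null hyperbolic form; sliceness of $I$ then follows from the homological criterion of Proposition~\ref{prop:topslice}, not from a disk embedding theorem. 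A direct embedding approach is not unreasonable in spirit, but as written your argument has a fatal gap.

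The gap is the sentence ``the relevant ambient $\pi_1$ is a quotient of the free group on the meridians of $L$, hence good in the Freedman--Quinn sense.'' Free groups of rank $\geq 2$ are \emph{not} known to be good, and $\pi_1(D^4\sm\nu D)$ can literally be a free group (e.g.\ for boundary-link slice disks), so ``quotient of a free group'' is no constraint at all. If the disk embedding theorem were available over these groups, the theorem would follow assuming only that $J$ has vanishing linking numbers --- and the paper notes explicitly that this stronger statement is equivalent to the open surgery/s-cobordism conjecture for free groups. The only unconditional tool is the Freedman--Quinn theorem in the \emph{$\pi_1$-null} setting, and the entire function of the hypothesis ``$\bar\mu(J)=0$ through length $2c$'' is to produce $\pi_1$-nullity: in the Kirby-calculus model of $\partial^+M_1$ the surgery kernel is a hyperbolic form on $2c$ spheres whose null-homotopies are attached along $2c$ parallel copies of the meridians $m_i$, and vanishing of the $\bar\mu$-invariants of $\hat{J}$ through length $2c$ is exactly what makes those $2c$ immersed null-homotopies \emph{disjoint} (Lemma~\ref{lem:ubar}, via \cite[Lemma~2.7]{FnT95}), hence the kernel $\pi_1$-null. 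Two further problems: your ``splicing'' of a $J$-side capped grope onto a $\delta_i$-side Whitney tower is not a defined operation as stated (the two objects have boundaries $J$ and $\eta_i$ respectively, and no matching framed circles are exhibited), and your explanation of the exponent $2c$ (``two commutator levels per intersection'') is not where it comes from --- it is the count of parallel meridian copies, two for each of the $c$ double points. Repairing the proof forces you back to the $\pi_1$-null s-cobordism theorem and hence to a bounding $4$--manifold rather than embedded disks, i.e.\ essentially to the paper's argument.
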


It is not hard to show that the theorem holds for $c=0$. We will therefore assume below that $c>0$ and in particular that the string link $J$ has trivial linking numbers (i.e. $\bar\mu$-invariants of length~2).

Note that, in the case $r=1$, $\hat{J}$ is a knot, and hence all Milnor's $\bar{\mu}$--invariants of $\hat{J}$ are zero.
In this case Theorem \ref{mainthm} simplifies to the following.

\begin{corollary} \label{corthm}
Let $D=D_1\amalg \dots\amalg D_m \hookrightarrow D^4$ be slice disks for a link $L$ in $S^3$.
Let $\eta$ be a closed curve in $S^3\sm L$, unknotted in $S^3$, such that $\eta$ is trivial in $\pi_1(D^4\sm D)$.
Then for any knot $\hat{J}$ the satellite link $I(L,\hat{J},\eta)$ is slice.
\end{corollary}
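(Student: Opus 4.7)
The plan is to deduce the corollary directly from Theorem~\ref{mainthm} in the special case $r=1$. Let $J$ be any $1$-component string link whose closure is $\hat{J}$. The data required by the theorem are: a proper map $\varphi$ of a $1$-multi-disk with $\eta_1=\eta$, an immersed disk in $D^4\sm D$ bounded by $\eta$, and a sufficient vanishing of $\bar{\mu}$-invariants of $J$. The main content of the argument is carried entirely by Theorem~\ref{mainthm}; what remains are routine translations of the hypotheses.

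First I would construct $\varphi$. Because $\eta$ is unknotted in $S^3$, it bounds an embedded disk $F\subset S^3$; after a small isotopy rel $\partial F=\eta$, we may assume $F$ is transverse to $L$, so $F\cap L$ is a finite collection of points in the interior of $F$. Since $\eta\cap L=\emptyset$ and $L$ is compact, we may enlarge $F$ slightly along a collar of $\eta$ in $S^3$ disjoint from $L$ to obtain a closed disk $F'\supset F$ with $F'\cap L=F\cap L$ lying in the interior of $F$. Choose a homeomorphism $\varphi:\E\to F'$ such that the inner sub-disk $E_1\subset\E$ maps onto the interior of $F$. Then $\varphi$ is a proper embedding of the $1$-multi-disk with distinguished curve $\eta_1=\varphi(\partial E_1)=\eta$.

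Next I would produce the immersed disk $\delta$. By hypothesis $\eta$ is null-homotopic in $D^4\sm D$, so there exists a continuous map $D^2\to D^4\sm D$ extending $\eta$ on the boundary. A small perturbation relative to the boundary places this map in general position, yielding an immersed disk $\delta$ with finitely many transverse self-intersection points, say $c$ of them, in the interior of $\delta$ and away from $D$.

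Finally, $\hat{J}$ is a knot, being the closure of a $1$-component string link, and Milnor's $\bar{\mu}$-invariants of a knot vanish in all lengths; hence the $\bar{\mu}$-invariants of $J$ vanish in particular up to length $2c$. Theorem~\ref{mainthm}, applied to $L$, $D$, $\varphi$, and $J$, now yields that $I(L,J,\E_\varphi)=I(L,\hat{J},\eta)$ is slice. No step here is an obstacle in its own right; the difficulty is entirely concentrated in the application of Theorem~\ref{mainthm} (and hence in the techniques of~\cite{FnT95}) used to kill the self-intersections of $\delta$ while accommodating the infection.
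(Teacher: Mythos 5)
Your proposal is correct and follows exactly the route the paper intends: the corollary is the $r=1$ specialization of Theorem~\ref{mainthm}, where the unknotted curve $\eta$ yields the proper $1$-multi-disk (the paper spells out this same extension-of-an-embedded-disk construction in Section~\ref{section:examples}), the null-homotopy of $\eta$ in $D^4\sm D$ gives the general-position immersed disk $\delta$, and the $\bar\mu$-invariant hypothesis is vacuous because $\hat{J}$ is a knot. No gaps; your write-up just makes explicit the routine steps the paper leaves implicit.
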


We note that our proof of Theorem \ref{mainthm} would go through under the weaker assumption that $J$ has trivial linking numbers if the 4-dimensional surgery sequence were exact for all fundamental groups. The latter is still an open problem. We use the assumption on the vanishing of higher Milnor-invariants of $J$ to get ourselves into the $\pi_1$-null setting where Freedman and Quinn \cite{FQ90} proved a surgery theorem up to s-cobordism. Conversely, if our theorem were true under this weaker assumption on $J$ then the surgery sequence would be exact and the s-cobordism theorem would hold for arbitrary fundamental groups. This follows from the following discussion and the comments below Conjecture~\ref{conj}.

\begin{remark}
An important special case of the theorem is when $L$ is the trivial link and $D$ is a set of slice disks coming from disks in 3-space. Take $r=m$ and choose $\eta_i$ in such a way that $(L_i,\eta_i)$ form Whitehead links in disjoint 3-balls. Then there are obvious immersed disks $\delta_i$ bounding $\eta_i$ each having exactly one self-intersection and no other intersections. This means that $c=m$ in the above theorem. Using the symmetry of the Whitehead link, one can redraw the picture so that the $\eta_i$ lie in a plane that also contains a multi--disk $\E$. It is then not hard to see that the multi--infection $I(L,J,\E_\varphi)$ is the Whitehead double of the closure of $J$. Our theorem thus implies that this Whitehead double is (freely) slice if the $\bar\mu$-invariants of $J$ vanish up to length $2m$. Theorem 3.1 in \cite{FnT95} gives the same result with the better bound $m+1$ rather than $2m$. This is the best known result concerning the ``if'' part of Conjecture~\ref{conj} above (the Conjecture implies that $m+1$ can be improved to $m$). Note that our current theorem vastly generalizes this very special case and hence it is not surprising that we need a slightly stronger assumption on the link $J$ in the general setting.
\end{remark}

Theorem~\ref{mainthm} places conditions on both the link $J$ (having vanishing Milnor invariants up to a certain length) and the curves $\{\eta_i\}$ (being null-homotopic in $D^4\sm D$). In general both these conditions are necessary. For example, in case $L$ and $J$ are knots ($m=r=1$), if the condition on $\eta$ is relaxed then in many cases $I(L,J,\E_\varphi)$ is provably \emph{not slice}, despite the fact that all the Milnor invariants vanish for $J$. In the case that $L$ is a link, even if the $\eta_i$ are null-homotopic, in general some condition on $J$ is necessary. Examples are given in Section \ref{section:examples}.

Theorem~\ref{mainthm} gives a very general method to prove that links are slice links. Yet the theorem applies only to links obtained by multi-infection starting from a known slice link, which a priori seems like a very special class. In fact, up to smooth concordance, it is not a restrictive class. The following observation, proven in Section~\ref{section:generality}, shows that, up to smooth concordance, \emph{every} algebraically slice knot can be obtained from a ribbon knot $L$ by multi-infection on a set of curves $\{\eta_i\}$ that lie in the commutator subgroup of $\pi_1(S^3\sm L)$ (such $\eta_i$ are at least candidates to be null-homotopic in the exterior of some set of slice disks for $L$). 

\begin{proposition}\label{prop:generality} Suppose $\mathcal{L}$ is any algebraically slice boundary link (for example any algebraically slice knot). Then $\mathcal{L}$ is smoothly concordant to a link $I$ which is of the form $I(L,J,\E_\varphi)$ where $L$ is a ribbon link, $J$ is a string link with linking numbers zero and the $\eta_i$ lie in the intersection of the terms of the lower central series of $\pi_1(S^3\sm L)$.
\end{proposition}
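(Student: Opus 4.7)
The plan is to build the ribbon link $L$ by standardizing a Seifert surface for $\mathcal{L}$ that carries the metabolizer, express the resulting link as a multi--infection, and verify the lower central series condition via Stallings' theorem.

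I would begin by choosing disjoint Seifert surfaces $F=F_1\sqcup\cdots\sqcup F_m\subset S^3$ for the components of $\mathcal{L}$, together with pairwise disjoint simple closed curves $\alpha_1,\ldots,\alpha_g$ on $F$ whose classes generate a metabolizer $H\subset H_1(F)$ for the Seifert form. These satisfy $\lk(\alpha_i,\alpha_j^+)=0$ for all $i,j$, where $\alpha_j^+$ denotes positive push--off. Each $\alpha_i$ has a knot type $K_i\subset S^3$, and the metabolizer identity yields $\lk(K_i,K_j)=0$. Next, I would isolate the knotting of each $\alpha_i$ inside disjoint $3$--balls $B_1,\ldots,B_g\subset S^3$, chosen disjoint from $\mathcal{L}$ and from the dual curves $\beta_j$, so that each $B_i\cap F$ is an embedded disk meeting $\alpha_i$ in a subarc which contains all of its knotting.

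The concordance is produced next via a sequence of band--moves, interpreted as $1$--handle attachments in $S^3\times[0,1]$, that standardize the portion of the Seifert surface outside $\bigcup_i B_i$ to a ribbon disk--and--band configuration. Let $\mathcal{L}'$ be the concordance endpoint. By construction $\mathcal{L}'=I(L,J,\E_\varphi)$, where $L$ is the ribbon link obtained by replacing each knotted tangle inside $B_i$ by the trivial tangle, the $g$--multi--disk $\E_\varphi$ has disks $E_i=B_i\cap F$ with boundaries $\eta_i=\partial E_i$, and the $g$--component string link $J$ has $J_i$ equal to the $1$--string--link realization of $K_i$. The linking numbers of $J$ vanish because of the metabolizer identity $\lk(K_i,K_j)=0$.

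To verify $\eta_i\in\bigcap_k\pi_1(S^3\setminus L)_k$, I would invoke Stallings' theorem. Let $D\subset D^4$ be a set of ribbon disks for $L$. Since $L$ is ribbon, $D^4\setminus D$ deformation retracts onto a $2$--complex obtained from $S^3\setminus L$ by attaching $2$--cells, and the inclusion $S^3\setminus L\hookrightarrow D^4\setminus D$ induces an isomorphism on $H_1$ and a surjection on $H_2$. Stallings' theorem then gives isomorphisms
\[
\pi_1(S^3\setminus L)/\pi_1(S^3\setminus L)_k\ \longrightarrow\ \pi_1(D^4\setminus D)/\pi_1(D^4\setminus D)_k
\]
for every $k\geq 1$. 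By construction, each $\eta_i=\partial E_i$ bounds the disk $E_i$ inside the (now trivial) ball $B_i$, which survives as a disk in $S^3\setminus L$ and extends to a disk in $D^4\setminus D$ via the ribbon structure; hence $\eta_i$ is null--homotopic in $D^4\setminus D$. The Stallings isomorphisms then force $\eta_i$ into $\bigcap_k\pi_1(S^3\setminus L)_k$.

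The main obstacle I anticipate is the standardization step of the concordance: carrying out the band--moves so that the resulting $L$ is a \emph{genuine} ribbon link and so that the $\eta_i$ really do bound embedded disks in $D^4\setminus D$ after the concordance. This requires a careful choice of bands relative to the metabolizing and dual curves, and is the geometric step where the algebraic sliceness hypothesis on $\mathcal{L}$ is essentially used.
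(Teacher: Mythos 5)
There are genuine gaps at all three stages of your argument. The central one is the claim that $L$, obtained by ``replacing each knotted tangle inside $B_i$ by the trivial tangle,'' is a ribbon link. Putting the knotting of each $\alpha_i$ into its own ball $B_i$ only unknots the band cores \emph{individually}; the mutual linking of the $\alpha_i$ (beyond linking numbers) cannot be confined to disjoint balls and survives in your $L$. To surger the Seifert surface to ribbon disks one needs the entire link of band cores to bound disjoint disks in $D^4$, and a link of unknots with vanishing linking numbers (e.g.\ a Borromean-rings configuration of metabolizing curves) need not be slice, let alone ribbon --- such examples are precisely the standard algebraically slice, non-slice knots. The paper avoids this by a different device: it takes $J$ to be the string link formed by the cores of \emph{all} the $\alpha$-bands in the exterior of one thickened multi-disk (so $J$ records all knotting \emph{and} linking), defines $L$ by inserting the $2$-cable of $-J$ into the multi-disk so that the band cores of $L$ become the closure of $-J\# J$, which \emph{is} ribbon, and defines $I$ by inserting the $2$-cable of $-J\# J$. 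Then $I=I(L,J,\E_\varphi)$ by a tangle insertion, and the concordance $\mathcal{L}\sim I$ is induced by $2$-cabling the string link concordance from the trivial string link $T$ to $-J\# J$. Your concordance step has no such mechanism: attaching $1$-handles (band moves) in $S^3\times[0,1]$ produces a cobordism of positive genus, not a concordance, and no argument is given that your ``standardization'' is realized by a concordance at all.

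The verification of the lower central series condition also does not work as written. The subdisks of the infecting multi-disk are meridian disks transverse to the bands, not pieces $B_i\cap F$ of the Seifert surface, and such a disk meets $L$ in two points (the two edges of the band are arcs of $L$), so $\eta_i$ does not bound a disk in $S^3\sm L$, and your claim that it ``extends to a disk in $D^4\sm D$ via the ribbon structure'' is exactly the null-homotopy hypothesis of Theorem \ref{mainthm}, which the paper pointedly does \emph{not} assert here (it calls the $\eta_i$ only ``candidates'' to be null-homotopic in a slice disk exterior). The correct, weaker argument is purely $3$-dimensional: each $\eta_i$ is a meridian of an $\alpha$-band and hence lies in the complement of a system of Seifert surfaces exhibiting $L$ as a boundary link; the induced map $\pi_1(S^3\sm L)\to F$ to the free group (Pontryagin--Thom on the surfaces) kills $\eta_i$ while sending meridians to generators, and Stallings' theorem identifies $F/F_k$ with $\pi_1(S^3\sm L)/\pi_1(S^3\sm L)_k$ for all $k$, forcing $\eta_i$ into every term of the lower central series.
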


Note that, by Stallings' theorem, a curve $\eta_i$ that is null-homotopic in the exterior of some set of slice disks must lie in the intersection of the lower central series of the link group.

\section{Proof of Theorem \ref{mainthm}}


\subsection{A sliceness criterion}

We start the proof of Theorem \ref{mainthm} by recalling the
following well--known criterion for links  that asserts that a link $L$ is slice if and only if $M_L$, the $3$-manifold obtained from $L$ by zero-framed Dehn surgery,
is the boundary of a $4$-manifold meeting certain homological criteria. The strategy of our proof will be to construct such a $4$-manifold for the zero surgery on the link $I(L,J,\E_\varphi)$ obtained by infection as in Theorem \ref{mainthm}.

 \begin{proposition} \label{prop:topslice}
A link $L=L_1\amalg\dots \amalg L_m$ is slice if and only if there exists a 4--manifold $W$ such that \bn
\item $\partial W=M_L$,
\item $\pi_1(W)$ is normally generated by images of meridians of $L$,
\item $H_1(W)\cong \Z^m$,
\item $H_2(W)=0$.
\en
\end{proposition}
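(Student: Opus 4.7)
The plan is to prove both implications separately. For the forward direction, assume $L$ is slice via disks $D = D_1 \amalg \dots \amalg D_m \subset D^4$ and set $W := D^4 \smallsetminus \nu(D)$ with tubular neighborhoods $\nu(D_i) \cong D_i \times D^2$. A direct inspection of $\partial \nu(D_i) = D_i \times S^1 \cup L_i \times D^2$ shows $\partial W = (S^3 \smallsetminus \nu(L)) \cup_{L_i \times S^1} (D_i \times S^1) = M_L$, since each glued solid torus has meridian the zero-framed longitude $L_i = \partial D_i$. Condition (2) then follows from Van Kampen applied to $D^4 = W \cup \nu(D)$ together with $\pi_1(D^4) = 1$, while conditions (3) and (4) come from the associated Mayer-Vietoris sequence, using $\nu(D) \simeq \amalg_i \mathrm{pt}$ and $W \cap \nu(D) \simeq \amalg_i S^1$.

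For the converse, suppose $W$ satisfies (1)--(4). In $M_L = \partial W$, the meridian $\mu_i$ of $L_i$ is isotopic to the core of the solid torus $T_i'$ that was attached to $S^3 \smallsetminus \nu(L)$ during zero-surgery on $L_i$. I would form $V$ by attaching $m$ two-handles to $W$ along $\mu_1,\dots,\mu_m$, each with the product framing inherited from $T_i'$. A local check shows this surgery precisely undoes each zero-surgery, so $\partial V = S^3$; moreover, under this identification, the cocore $\{0\} \times D^2$ of the $i$-th $2$-handle is a flatly and properly embedded disk whose boundary is the original component $L_i \subset S^3$.

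It remains to verify that $V$ has the homotopy type of $D^4$. Condition (2) says $\pi_1(W)$ is normally generated by the $\mu_i$, which are killed by the $2$-handle attachments, so $\pi_1(V) = 1$. For homology, the long exact sequence of the pair $(V, W)$ yields $H_2(V, W) = \Z^m$ and $H_1(V, W) = 0$, with connecting map $\Z^m \to H_1(W) = \Z^m$ sending the $i$-th generator to $[\mu_i]$. Because (2) forces the classes $[\mu_i]$ to generate $H_1(W)$ and (3) identifies $H_1(W)$ with $\Z^m$, the $[\mu_i]$ must form a $\Z$-basis, so this connecting map is an isomorphism; combined with (4) we conclude $H_1(V) = H_2(V) = 0$. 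Hence $V$ is a compact, simply-connected topological $4$-manifold with $\partial V = S^3$ and the homology of $D^4$, so by Freedman's topological $4$-dimensional Poincar\'e theorem \cite{Fr85,FQ90} it is homeomorphic to $D^4$. The cocores of the $2$-handles are then the desired flat slice disks for $L$. The single nontrivial ingredient is Freedman's theorem, which is precisely why the criterion detects topological rather than smooth sliceness.
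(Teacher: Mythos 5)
Your proof is correct and follows exactly the paper's argument: the forward direction takes $W=D^4\sm \nu D$, and the converse attaches zero-framed $2$-handles along the meridians and invokes Freedman's topological Poincar\'e theorem to identify the result with $D^4$, with the cocores giving the slice disks. You simply supply more of the routine Mayer--Vietoris and long-exact-sequence details than the paper does.
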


\begin{proof}
Let $L=L_1\amalg\dots \amalg L_m$ be a link in $S^{3}$ and let
$D=D_1\amalg\dots\amalg D_m$ be a union of slice disks for $L$. Let $W:=D^4\sm
\nu D$ where $\nu D$ is a tubular neighborhood of $D$, which exists
because $D$ is assumed to be locally flat. It is easy to see that
$W$ satisfies the required properties.

Conversely, given such $W$  we add a 2--handle  to $W$ along a meridian of each component of $L$
and call the resulting manifold $W'$. Using the properties (1) to (4) we can easily see that
$\partial W'=S^3$, $\pi_1(W')=0$, and $H_2(W')=0$. Hence $W'$ is homeomorphic
to $D^4$ by Freedman's solution of the topological Poincar\'{e} conjecture in dimension~4. Moreover the cocores of the 2--handles in $W'$ form a disjoint union of  slice disks for the
components of $L$.
\end{proof}

\subsection{Multi--infections and bounding 4--manifolds}\label{section:bound}

The starting point of Theorem~\ref{mainthm} is a slice link $L$. Thus $M_L$ is the boundary of a $4$-manifold, $W_L\equiv D^4\sm \nu D_1\amalg \dots\amalg \nu D_r$, that satisfies the properties of that Proposition~\ref{prop:topslice}. Our goal is to produce a $4$ manifold whose boundary is $M_{I(L,J,\E_\varphi)}$ that satisfies these properties. This will establish that
$I(L,J,\E_\varphi)$ is slice. In this subsection, as a preliminary step we will exhibit a canonical cobordism between $M_L$, $M_{I(L,J,\E_\varphi)}$ and a third manifold $M_{\hat{J}}$, the zero surgery on the link obtained by closing up the string link $J$, as shown in Figure~\ref{stringlink}.

\begin{figure}[h] \centering
\begin{tabular}{cc}
\includegraphics[scale=0.3]{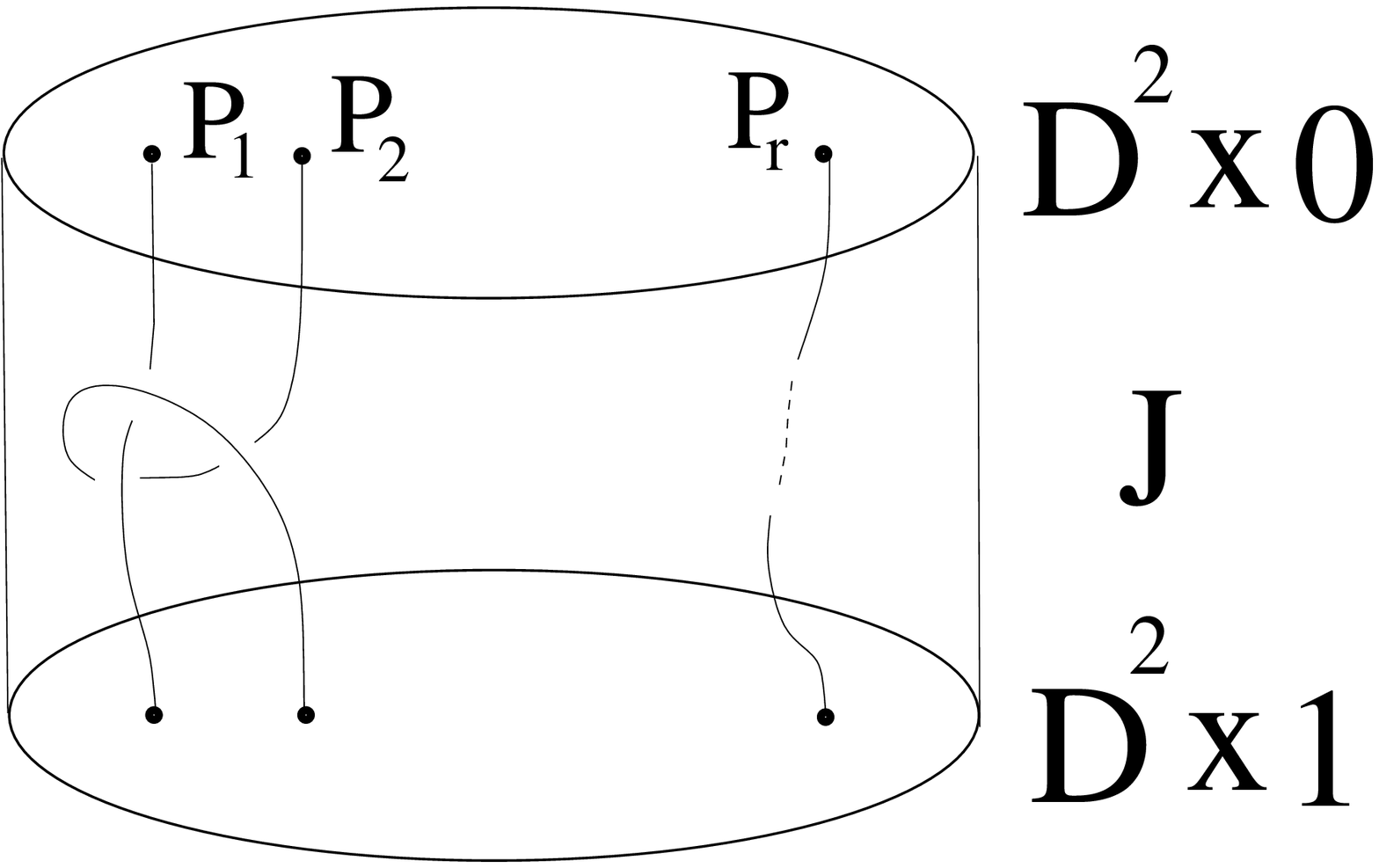}&\hspace{2cm}\includegraphics[scale=0.3]{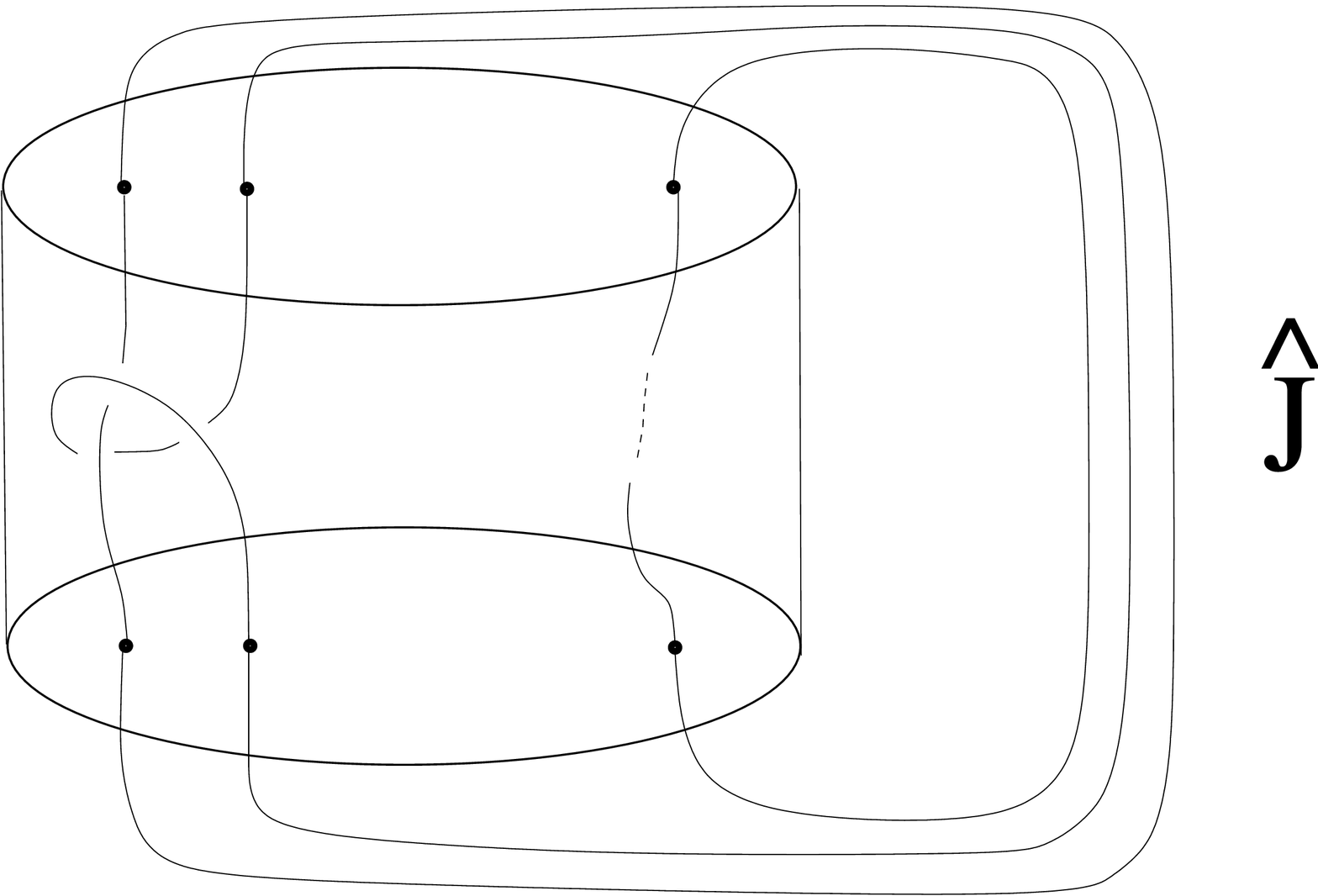}
\end{tabular}
\caption{String link and its closure.} \label{stringlink}
\end{figure}

First we give a more formal definition of the
multi--infection of a link. Let  $L\subset S^3$ be an arbitrary link and $\varphi:\Bbb{E}\to S^3$ be a proper map of an $r$--multi--disk. Recall that $\E_\varphi$ is the image of that disk and we denote by $E_\varphi$ the complement of the $r$ subdisks in $\E_\varphi$.
Let $J$ be an arbitrary $r$-component  string link as in Figure~\ref{stringlink}. Note that  $(\E_\varphi \sm E_\varphi)$ (as shown in Figure~\ref{multi_infection}) is a $2$-disk with $r$ subdisks deleted and so $(\E_\varphi \sm E_\varphi)\times [0,1]$ (as shown in the center of Figure~\ref{fig:infection})
may be viewed as a copy of the exterior of the trivial $r$-component string link. This  manifold has the same boundary as the exterior of the $r$-component string link $J$, denoted $\big (\dtimes\sm \nu J\big)$. Thus we can alter $S^3$ (in the complement of $L$) by deleting the exterior of this trivial $r$-string link and inserting the exterior of the (nontrivial) string link $J$. This should be done in such a way as to equate the meridians and longitudes of these two string links. Recall that the meridians of the trivial string link are the boundary curves of $\varphi(E_1),\dots,\varphi(E_r)$ that we denote by $\eta_1,\dots,\eta_r$. We claim that the resulting manifold is homeomorphic to $S^3$ since
\[ \ba{cl} &\big(S^3\sm \int ((\E_\varphi \sm  E_\varphi)\times [0,1])\big)\dcup \big(\dtimes\sm \nu J\big)\\
=&\big(S^3\sm \E_\varphi \times [0,1]\big)\dcup \big((\dtimes\sm \nu J)\cup
(\overline{E_\varphi}\times
[0,1]) \big)\\
\cong &S^3.\ea
\]
The last homeomorphism follows from the observation that the previous space is the union of two
3--balls. Finally we define the link $I(L,J,\E_\varphi)$ to be the image of the link $L$ under this homeomorphism. It is easy to see that this formal definition agrees with the  more intuitive definition
in the introduction. In the sequel, we often abbreviate $I(L,J,\E_\varphi)$ by $I$.

This definition yields a description of the multi--infection as: deleting the exterior of a trivial string link and inserting the exterior of a non-trivial string link. Since this deletion/insertion occurs in the complement of $L$, it applies equally to the zero-framed surgery manifolds $M_L$ and $M_I$. That is
$$
M_{I} = \big(M_L \sm \{\text{exterior of trivial string link}\}\big)\dcup \big(\text{exterior of} ~J\big).
$$


From now on assume that we are in the situation of Theorem \ref{mainthm} where $L$ is a slice link and let $W_L=D^4\sm \nu D_1\amalg \dots\amalg \nu D_r$. Recall that $\partial W_L=M_L$ contains a copy of the exterior of the trivial $r$-string link, the handlebody
$\mathbb{H}\equiv (\E_\varphi \sm  E_\varphi)\times [0,1]$ as in the center of Figure~\ref{fig:infection}. Furthermore we claim that $M_{\hat{J}}$ also contains a canonical copy of $\mathbb{H}$. In fact $M_{\hat{J}}$ decomposes as the union of the exterior of the string link $J$ and the exterior of a trivial $r$-component string link.  To see this, view $D^2\times
[0,1]$ as a submanifold of $S^3$ via the standard embedding and  let $B^2\times
[0,1]$ denote the complementary $3$-ball. View the string link $J$ as contained in $D^2\times
[0,1]$ and regard the remainder of $\hat{J}$ as a trivial string link, $T$, contained in $B^2\times
[0,1]$. Clearly then
$$
(S^3\sm\nu\hat{J})=~(\dtimes \sm \nu J)\cup (B^2\times
[0,1] \sm \nu T).
$$
Since
$$
M_{\hat{J}}=\big(S^3\sm \nu \hat{J}\big)\dcup \big(\cup_{i=1}^r \mu_{\hat{J}_i}\times D^2\big),
$$
$M_{\hat{J}}$ decomposes into $(\dtimes \sm \nu J)$, the exterior of the string link $J$, and the handlebody
$$
(B^2\times [0,1] \sm \nu T)\dcup \big(\cup_{i=1}^r \mu_{\hat{J}_i}\times D^2 \big) \cong(B^2\times [0,1] \sm \nu T)
$$
The last homeomorphism follows from the fact that each $\mu_{\hat{J}_i}\times D^2$ is attached only along $\mu_{\hat{J}_i}\times A$ where $A$ is an arc in $\partial D^2$. Namely, it is the arc running along $T$, rather then $J$.  It follows that the fundamental group of this handlebody
is the free group on $\mu_1,\dots,\mu_r$, the meridians of $T$ and $\hat{J}$. We now form a $4$-manifold
\[ N=W_L\cup (M_{\hat{J}}\times [0,1])\]
as shown schematically in Figure~\ref{fig:mickey},
\begin{figure}[htbp]
\setlength{\unitlength}{1pt}
\begin{picture}(200,160)
\put(-35,15){\includegraphics[height=150pt]{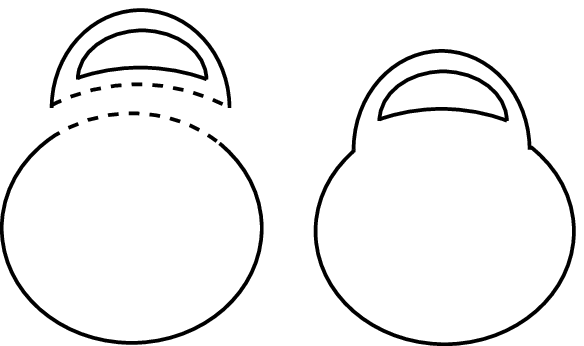}}
\put(18,30){$W_L$}
\put(-68,135){$M_{\hat{J}}\times [0,1]$}
\put(19,98){$\mathbb{H}$}
\put(153,106){$M_{\hat{J}}$}
\put(210,98){$M_I$}
\put(153,30){$N$}
\end{picture}
\caption{}\label{fig:mickey}
\end{figure}
\noindent by identifying $\mathbb{H}$, the copy of the trivial string link exterior in $\partial W_L$ with the copy in $M_{\hat{J}}\times \{0\}$ (shown dashed in Figure~\ref{fig:mickey}) in a way such that the curves $\eta_i$ on the former get identified to the meridians $\mu_{\hat{J}_i}$ of the latter. This is done in such a way that the ``new'' boundary component created is precisely $M_I$ since it is obtained from $M_L$ be deleting the trivial string link exterior and inserting the exterior of $J$.

A key observation is that the curves $\eta_i$ which are equated to the meridians $\mu_i$ of $J$ live in $\mathbb{H}\subset \partial W_L$ and \emph{are null-homotopic in $W_L$} by hypothesis.

If $\hat{J}$ were itself a slice link then we would know that $M_{\hat{J}}$ were the boundary of some $4$-manifold $W$ that satisfies the conditions of Proposition
\ref{prop:topslice}. We could then use this $W$ to cap off $M_{\hat{J}}\subset \partial N$, resulting in $4$-manifold $N'$ whose boundary is $M_I$ and which satisfies the conditions of Proposition~\ref{prop:topslice}, proving that the infected link $I$ were slice. This establishes the following (previously known) very special case of Theorem~\ref{mainthm} which holds without any hypotheses on the curves $\eta_i$.

\begin{corollary} The link obtained by a multi-infection of a slice link $L$ using a string link $J$ whose closure is a slice link, is again a slice link.
\end{corollary}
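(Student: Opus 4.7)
The plan is to apply Proposition~\ref{prop:topslice} to the 4-manifold
\[
N' := N \cup_{M_{\hat{J}}} W_{\hat{J}},
\]
where $W_{\hat{J}}$ is a 4-manifold bounding $M_{\hat{J}}$ and satisfying the four conditions of Proposition~\ref{prop:topslice}, produced by applying that proposition to the (by hypothesis) slice link $\hat{J}$, and the gluing takes place along $M_{\hat{J}}\times\{1\}\subset \partial N$. By construction $\partial N'=M_I$, so the task reduces to verifying the four conditions of Proposition~\ref{prop:topslice} for $N'$. Since $M_{\hat{J}}\times[0,1]\cup W_{\hat{J}}$ deformation retracts onto $W_{\hat{J}}$, the space $N'$ is homotopy equivalent to $W_L\cup_{\mathbb{H}} W_{\hat{J}}$, where the handlebody $\mathbb{H}$ is glued into its canonical copy inside $\partial W_{\hat{J}}$ in such a way that each curve $\eta_i$ is identified with the meridian $\mu_{\hat{J}_i}$ of $\hat{J}$. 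I would work with this simpler homotopy-equivalent model for both Van Kampen and Mayer--Vietoris.

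For condition (2), Van Kampen gives $\pi_1(N')=\pi_1(W_L) *_{\pi_1(\mathbb{H})} \pi_1(W_{\hat{J}})$. By hypothesis $\pi_1(W_{\hat{J}})$ is normally generated by the $\mu_{\hat{J}_i}$, which by the gluing are identified in $\pi_1(N')$ with the $\eta_i\in\pi_1(W_L)$. Hence the image of $\pi_1(W_{\hat{J}})$ is contained in the normal closure of $\pi_1(W_L)$, and since $\pi_1(W_L)$ is normally generated by the meridians of $L$ (which coincide with the meridians of $I$ since the infection takes place away from them), condition~(2) follows. Note that this argument uses \emph{no} hypothesis about the $\eta_i$ being null-homotopic in $W_L$, matching the claim that the corollary holds without that assumption.

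For conditions (3) and (4), I would use Mayer--Vietoris. Since $\mathbb{H}$ is a handlebody we have $H_2(\mathbb{H})=0$, and $H_2(W_L)=H_2(W_{\hat{J}})=0$ by Proposition~\ref{prop:topslice} applied to $L$ and $\hat{J}$, so the sequence collapses to
\[
0\longrightarrow H_2(N')\longrightarrow H_1(\mathbb{H}) \xrightarrow{\alpha} H_1(W_L)\oplus H_1(W_{\hat{J}}) \longrightarrow H_1(N') \longrightarrow 0.
\]
Here $H_1(\mathbb{H})\cong\Z^r$ is generated by the $\eta_i$, $H_1(W_L)\cong\Z^m$ by the meridians $\mu_{L_j}$, and $H_1(W_{\hat{J}})\cong\Z^r$ by the meridians $\mu_{\hat{J}_i}$. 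The class $[\eta_i]$ maps to $\sum_j n_{ij}[\mu_{L_j}]$ in $H_1(W_L)$, where $n_{ij}$ is the algebraic intersection number of $\varphi(E_i)$ with $L_j$, and to $\pm[\mu_{\hat{J}_i}]$ in $H_1(W_{\hat{J}})$. The second coordinate alone makes $\alpha$ injective, forcing $H_2(N')=0$, and allows one to eliminate each $\mu_{\hat{J}_i}$ from the cokernel, giving $H_1(N')\cong\Z^m$ as required.

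The main obstacle, modest as it is, lies in checking via the explicit construction of $N$ that the canonical copy of $\mathbb{H}$ inside $M_{\hat{J}}$ really is glued so that the meridians $\mu_{\hat{J}_i}$ correspond to the curves $\eta_i$; this identification is built into the construction of $N$ recalled above, and once it is in hand both calculations are routine, so Proposition~\ref{prop:topslice} yields sliceness of $I$.
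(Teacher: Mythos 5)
Your proposal is correct and follows essentially the same route as the paper: cap off the $M_{\hat{J}}$ boundary component of $N$ with a $4$-manifold $W_{\hat{J}}$ furnished by Proposition~\ref{prop:topslice} applied to the slice link $\hat{J}$, and check the four conditions for the result. The paper merely asserts that the conditions hold for $N'$; your Mayer--Vietoris and Van Kampen verifications (using the model $W_L\cup_{\mathbb{H}}W_{\hat{J}}$ and the identification $\eta_i=\mu_{\hat{J}_i}$) supply exactly the omitted details and are sound.
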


However, in general $M_{\hat{J}}$ will not bound a $4$-manifold that satisfies Proposition
\ref{prop:topslice}. In this case we must be more clever and make use of our hypotheses on the $\eta_i$ curves.

\begin{lemma} \label{lem:propN} $N$ satisfies the following conditions:
\begin{itemize}
\item [1.] $\partial(N)=M_I\amalg -M_{\hat{J}}$,
\item [2.] $\pi_1(N)$ is normally generated by the meridians of $I$,
\item [3.] $H_1(M_I)\to H_1(N)$ is an isomorphism,
\item [4.] $H_2(M_{\hat{J}})\to H_2(N)$ is an isomorphism,
\item [5.] $\pi_1(M_{\hat{J}})\to \pi_1(N)$ is the zero map.
\end{itemize}
\end{lemma}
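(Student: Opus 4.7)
The plan is to verify the five properties by direct application of van Kampen's theorem for (2) and (5) and the Mayer--Vietoris sequence for (3) and (4), with (1) being essentially visible from the construction. The single crucial input is that the curves $\eta_i$ are null-homotopic in $W_L$ (via the immersed disks $\delta_i$) while being simultaneously identified with the normal generators $\mu_i$ of $\pi_1(M_{\hat{J}})$ under the gluing $\mathbb{H}\hookrightarrow M_{\hat{J}}\times\{0\}$. Once this is isolated, the five conclusions cascade with little difficulty; I do not anticipate a real obstacle beyond bookkeeping.

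For (1), the handlebody $\mathbb{H}\subset\partial W_L$ is glued to the identical handlebody $\mathbb{H}\subset M_{\hat{J}}\times\{0\}$, so the leftover boundary splits into $M_{\hat{J}}\times\{1\}\cong -M_{\hat{J}}$ together with $(M_L\sm\mathbb{H})\cup((M_{\hat{J}}\times\{0\})\sm\mathbb{H})$. By the description of $M_I$ derived in Section~\ref{section:bound} as $(M_L\sm\mathbb{H})\cup(\dtimes\sm\nu J)$, this latter piece is precisely $M_I$.

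For (2) and (5), apply van Kampen to $N=W_L\cup_{\mathbb{H}}(M_{\hat{J}}\times[0,1])$. Since $\mathbb{H}$ is a genus-$r$ handlebody, $\pi_1(\mathbb{H})$ is free on $\eta_1,\dots,\eta_r$, and these map to $1$ in $\pi_1(W_L)$ (by hypothesis) and to $\mu_i$ in $\pi_1(M_{\hat{J}})$ (by the gluing). The amalgamation relations $\eta_i=\mu_i$ therefore reduce to $\mu_i=1$, and since the $\mu_i$ normally generate $\pi_1(M_{\hat{J}})$, the image of $\pi_1(M_{\hat{J}}\times[0,1])\to\pi_1(N)$ is trivial, giving (5). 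Moreover $\pi_1(N)\cong\pi_1(W_L)$, which by Proposition~\ref{prop:topslice}(2) applied to $L$ is normally generated by the meridians of $L$. Choosing these meridians disjoint from $\mathbb{H}$, they sit in the unmodified portion $M_L\sm\mathbb{H}\subset M_I\subset\partial N$ and coincide there with meridians of $I$, giving~(2).

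For (3) and (4), use Mayer--Vietoris for the same decomposition. The map $\chi\colon H_1(\mathbb{H})=\Z^r\to H_1(W_L)\oplus H_1(M_{\hat{J}})=\Z^m\oplus\Z^r$ sends $\eta_i\mapsto(0,\mu_i)$ by the preceding observations, and is injective since the $\mu_i$ form a $\Z$-basis of $H_1(M_{\hat{J}})$. Combined with $H_2(\mathbb{H})=0=H_2(W_L)$ (the latter by Proposition~\ref{prop:topslice}(4)), the Mayer--Vietoris sequence gives $H_2(N)\cong H_2(M_{\hat{J}})$ via inclusion, establishing (4); and it gives $H_1(N)\cong\Z^m$, generated by the meridians of $L$, which as above coincide with meridians of $I$ on $M_I$, establishing (3).
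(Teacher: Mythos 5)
Your proposal is correct and follows essentially the same route as the paper: the construction gives (1) directly, Seifert--van Kampen with the relations $\eta_i=\mu_i$ (where $\eta_i$ bounds in $W_L$ and the $\mu_i$ normally generate $\pi_1(M_{\hat{J}})$) gives (2) and (5), and Mayer--Vietoris along $\mathbb{H}$ gives (3) and (4), using as you do that the $\mu_i$ form a basis of $H_1(M_{\hat{J}})$ (which rests on the standing assumption that $\hat{J}$ has vanishing linking numbers). If anything, your normal-generation argument for (5) is slightly more careful than the paper's, which asserts surjectivity of $\pi_1(\mathbb{H})\to\pi_1(M_{\hat{J}})$.
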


\begin{proof}
$N$ is the union of $W_L$ and $M_{\hat{J}}\times [0,1]$ glued along $\mathbb{H}$. Note also that $\{\mu_i\}$ is a basis for the first homology of $\mathbb{H}$. Therefore the Mayer--Vietoris sequence
becomes
\[ \ba{cccccccccccccccccccccc}
&0&\to&H_2(W_L)\oplus H_2(M_{\hat{J}}\times [0,1])&\to& H_2(N)\\
\to& \oplus_{i=1}^r \Z\mu_{i}&\to&H_1(W_L)\oplus
H_1(M_{\hat{J}}\times [0,1])&\to& H_1(N)&\to& 0.\ea \]
Since $
 \oplus_{i=1}^r \Z\mu_{\hat{J}_i}\longisoarrow H_1(M_{\hat{J}})$ and since $H_2(W_L)=0$,
 it follows that $H_2(M_{\hat{J}})\to H_2(N)$ is an isomorphism, establishing $(4)$. Since $\mu_i=\eta_i$ dies in $H_1(W_L)$ it also follows
  that $H_1(W_{L})\to H_1(N)$ is an isomorphism. But $H_1(W_{L})\cong H_1(M_L)\cong \mathbb{Z}^m$ generated by the meridians of
  $L$. Clearly these same meridians are a basis for $H_1$ of the infected link exterior and thus for $H_1(M_I)$. This establishes $(3)$.

In order to prove $(5)$ note that the map
$$
\langle\mu_i\rangle \cong \pi_1(\mathbb{H})\to \pi_1(M_{\hat{J}}\times [0,1])
$$
is surjective and the map
$$
\langle \eta_i\rangle \cong \pi_1(\mathbb{H})\to \pi_1(W_L)
$$
is the zero map. When gluing the two copies of $\mathbb{H}$, the
meridians $\mu_i$ are identified with the $\eta_i$, establishing $(5)$. By the Seifert-van Kampen theorem we have
\[ \pi_1(N)=\pi_1(W_L)*_{\{\eta_i=\mu_i\}}\pi_1(M_{\hat{J}}).\]
Moreover $\pi_1(W_I)$ is normally generated by the meridians of the link $I$, and $\pi_1(M_{\hat{J}})$ is normally generated by $\{\eta_i\}$ which are trivial in $\pi_1(W_L)$. Thus
$\pi_1(N)$ is normally generated by
the meridians of $I$ establishing $(2)$.
\end{proof}

\subsection{Conclusion of the proof}
We show how  the proof of a theorem of Freedman and Teichner can be used to alter $N$ to a $4$-manifold, $N'$, whose boundary is $M_I$ and which satisfies Proposition~\ref{prop:topslice}. We strongly encourage the reader to have pages 547--549 of \cite{FnT95} available.

Recall the situation shown on the right-hand side of Figure~\ref{fig:mickey}. Let $M$ denote a collar on the $M_{\hat{J}}$ boundary component as indicated by the shaded portion on the left-hand side of Figure~\ref{fig:M1}.
\begin{figure}[htbp]
\setlength{\unitlength}{1pt}
\begin{picture}(200,160)

\put(23,30){$N$}
\put(-32,135){$M_I$}
\put(9,98){$\delta_i$}

\put(170,27){$N\sm M_1$}
\put(-35,15){\includegraphics[height=150pt]{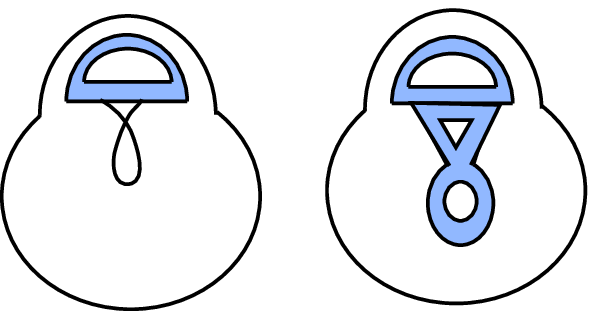}}
\put(20,133){$M_{\hat{J}}$}
\put(210,98){$\partial^+ M_1$}
\end{picture}
\caption{}\label{fig:M1}
\end{figure}

\noindent A note of caution is in order. We shall shortly appeal to the details of a proof in \cite{FnT95}. In that proof, the $M_{\hat{J}}$ -boundary component of $N$ is capped off by a $4$-manifold that is called $M$. But in fact this ``cap'' is not important to the proof (since the strategy is to replace it anyway). Therefore we omit the cap. Our collar $M$ will play the role of $M$ and our $N$ will play the role of $N$ in \cite{FnT95}.

Let $\partial ^+ M$ denote the ``outer'' boundary component of the collar $M$. Recall that $\pi_1(\partial ^+ M)=\pi_1(M_{\hat{J}})$  is normally generated by its meridians $\mu_i=\eta_1,\dots,\mu_r=\eta_r$ and by assumption these curves bound immersed disks $\delta_i$ in $W_L$ where $c$ is the total number of intersections and self-intersections. One such disk is shown schematically on the left side of Figure~\ref{fig:M1}. We now closely follow the proof of ~\cite[p.~547]{FnT95} using the same notation. In accordance with that notation, set $\gamma_i=\eta_i=\mu_i$. Let $M_1$ be a regular neighborhood  of $M\cup \{\delta_i\}\hookrightarrow N$ as shown schematically on the right-hand side of Figure~\ref{fig:M1} by the shaded portion of $N$. Now discard $M_1$ and consider $N\sm M_1$, the unshaded part of the figure. The latter has a new boundary component, $\partial ^+ M_1$. The strategy is to produce, using the proof in \cite{FnT95}, another $4$--manifold $M_3$ with $\partial M_3=\partial^+M_1$ and use it to plug up this hole in $N\sm M_1$. Then, letting
\[
N'=(N\sm M_1)\cup_{\partial M_3} M_3,
\]
we see that $\partial N'=M_I$ and we will verify that $N'$ also satisfies the other conditions of Proposition~\ref{prop:topslice}, establishing that $I$ is a slice link.

\begin{lemma}\label{lem:M_3} There exists a $4$-manifold $M_3$ with $\partial M_3=\partial^+M_1$ such that
\begin{itemize}
\item [1.] The inclusion of the boundary induces an isomorphism $H_1(\partial M_3)\cong H_1(M_3)$.
\item [2.] $M_3$ is homotopy equivalent to a wedge of $c$ circles where these circles correspond precisely to the double point loops among the $\delta_i$.
\end{itemize}
\end{lemma}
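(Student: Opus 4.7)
The plan is to construct $M_3$ by mimicking the construction on pages 547--549 of \cite{FnT95}, translating the geometric consequence of the vanishing Milnor invariants of $J$ into the required $4$-manifold with boundary $\partial^+M_1$.

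First I would give an explicit description of $\partial^+M_1$. Since each $\delta_i$ is an immersed disk in $N$ with boundary $\gamma_i\subset \partial^+M\cong M_{\hat{J}}$, and the $\delta_i$'s have $c$ intersection and self-intersection points in total, a standard plumbing analysis of the regular neighborhood $M_1$ of $M\cup\{\delta_i\}$ shows that $\partial^+M_1$ is obtained from $M_{\hat{J}}$ by $0$-framed surgery on the $\gamma_i$'s (from the disk-like parts of the neighborhoods) together with connected sum with $c$ copies of $S^1\times S^2$, one per double-point (arising from the local Hopf plumbings). In particular $H_1(\partial^+M_1)\cong\mathbb{Z}^c$, generated precisely by the $c$ double-point loops of the $\delta_i$'s.

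Next I would extract geometric content from the hypothesis that the Milnor $\bar\mu$-invariants of $J$ vanish up to length $2c$: by iteratively applying Dwyer's theorem together with the grope-existence theorem (or equivalently by Igusa--Orr), the meridians $\mu_1,\dots,\mu_r$ of $\hat{J}$ bound a class-$c$ symmetric capped grope, equivalently a $\pi_1$-null height-$c$ Whitney tower, in a $4$-manifold lying over $M_{\hat{J}}$. Following \cite{FnT95}, $M_3$ is then assembled by attaching to a collar of $\partial^+M_1$ a sequence of $4$-dimensional thickenings of the grope's stages, arranged and capped off at the top stage so that the resulting $4$-manifold collapses onto a spine consisting of the $c$ circles corresponding to the double-point loops of the $\delta_i$'s.

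Finally, verifying conditions (1) and (2) reduces to a Mayer--Vietoris or handle-level computation: the $c$ spine circles of $M_3$ match, under inclusion, the $c$ double-point loop generators of $H_1(\partial^+M_1)$ identified above, giving the asserted isomorphism on $H_1$, while $H_2(M_3)=0$ follows because the grope construction contributes trivially in degree $2$ (each grope stage is built from surfaces that are algebraically cancelled at the next stage). The main obstacle, and the crux of the argument, is the precise combinatorial bookkeeping that matches tip-circles and caps of the grope to the combinatorial intersection pattern of the $\delta_i$'s while keeping the $\pi_1$-nullity available; this is exactly what is carried out in detail in \cite{FnT95}, and the present lemma is essentially a repackaging of that construction in a form suited to our multi-infection setup.
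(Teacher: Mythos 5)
Your overall framing (follow \cite{FnT95} pp.~547--549, identify $\partial^+M_1$ from the plumbing structure of $M_1$, and convert the vanishing of the $\bar{\mu}$-invariants into geometric input) is the right starting point, but the core of the argument is missing. The paper does not obtain $M_3$ by thickening a capped grope and observing that it ``collapses onto a spine of $c$ circles.'' It first constructs an intermediate $4$-manifold $M_2$ (Lemma~\ref{lem:M_2}) by Kirby calculus: $M_2$ is the contractible manifold $Z$ with $\partial Z=\Sigma$ together with $c$ $1$-handles and $2c$ $2$-handles, so that $\partial M_2=\partial^+M_1$, $\pi_1(M_2)$ is free on the $c$ double point loops, and $\pi_2(M_2)$ is a free module carrying a hyperbolic intersection form. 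The passage from $M_2$ to $M_3$ --- killing that hyperbolic form while keeping the boundary fixed --- is exactly the step you cannot get for free: surgery over a free group is an open problem (as the paper stresses in the introduction), and the lemma is obtained only by invoking the Freedman--Quinn surgery theorem up to s-cobordism in the $\pi_1$-null case \cite{FQ90}. Your assertion that $H_2(M_3)=0$ because ``each grope stage is algebraically cancelled at the next stage'' is precisely the kind of algebraic cancellation that does not suffice in dimension $4$; without the Freedman--Quinn theorem you have no way to realize the cancellation geometrically, nor to guarantee $\partial M_3=\partial^+M_1$ for a grope thickening that lives ``in a $4$-manifold lying over $M_{\hat{J}}$.''

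Relatedly, the role of the Milnor-invariant hypothesis is different from what you describe: via ~\cite[Lemma~2.7]{FnT95}, Dwyer's theorem, and the observation that $\Sigma_L\cong M_{\hat{J}}$, it shows that the $2c$ untwisted parallel copies of the meridians $m_i$ --- the attaching circles of the $2$-handles of $M_2$ --- bound disjointly immersed disks in $Z$ (Lemma~\ref{lem:ubar}). This makes the surgery kernel $\pi_1$-null so that the Freedman--Quinn theorem applies; it does not by itself produce $M_3$. Two smaller inaccuracies: the framings on the $\gamma_i$ are the framings $f_i$ induced by the $\delta_i$, not $0$-framings; and $\partial^+M_1$ is not simply a surgered $M_{\hat{J}}$ connect-summed with $c$ copies of $S^1\times S^2$ --- the correct description is the Kirby diagram of ~\cite[Figures~4.1--4.3]{FnT95}, in which the $2c$ parallel copies of the $m_i$ become $2$-handle attaching circles interacting with the $1$-handles arising from the double points.
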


Before constructing $M_3$, we prove that its existence will enable us to finish the proof of Theorem~\ref{mainthm}.

\begin{lemma}\label{lem:nminusm} Using the inclusion induced maps, the following statements hold:
\begin{itemize}
\item [1.] $H_1(N\sm M_1)\to H_1(N)$ is an isomorphism,
\item [2.] $H_2(\partial^+M_1)\to H_2(N\sm M_1)$ is surjective,
\item [3.] $\pi_1(N\sm M_1)$ is normally generated by the meridians of $I$ and the meridians of the disks $\delta_i$.
\end{itemize}
\end{lemma}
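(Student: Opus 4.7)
The plan is to apply Mayer-Vietoris and Seifert-van Kampen to the decomposition $N=M_1\cup_{\partial^+M_1}(N\sm M_1)$, with the main input being the homotopy type of $M_1$, which deformation retracts onto $M_{\hat{J}}\cup\bigcup_i\delta_i(D^2)$.

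First I would compute $H_*(M_1)$. Since each image $\delta_i(D^2)$ is a $2$-disk with $c_i$ pairs of interior points identified, it is homotopy equivalent to a wedge of $c_i$ circles (the double-point loops) in which the boundary $\partial D^2$ is null-homotopic. Attaching such a complex to $M_{\hat{J}}$ along $\mu_i$ has the homotopical effect of attaching an abstract $2$-cell along $\mu_i$ and wedging on $c_i$ circles. Since the $\mu_i$ form a free basis of $H_1(M_{\hat{J}})\cong\Z^r$, these $2$-cell attachments kill $H_1$ and introduce no new $H_2$. Hence $H_1(M_1)\cong\Z^c$ is generated by double-point loops, while $H_2(M_{\hat{J}})\to H_2(M_1)$ is an isomorphism. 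Composing with Lemma~\ref{lem:propN}(4) gives that $H_2(M_1)\to H_2(N)$ is an isomorphism. Moreover, each double-point loop bounds a small $2$-disk in a $4$-ball around the double point, so $H_1(M_1)\to H_1(N)$ is the zero map.

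For (1) and (2), I feed these facts into the Mayer-Vietoris sequence. Surjectivity of $H_2(M_1)\to H_2(N)$ kills the connecting map $H_2(N)\to H_1(\partial^+M_1)$; combined with the vanishing of $H_1(M_1)\to H_1(N)$, the exact sequence reduces to
\[
H_1(\partial^+M_1)\xrightarrow{(i_*,-j_*)} H_1(M_1)\oplus H_1(N\sm M_1)\to H_1(N)\to 0,
\]
where the right-hand map sends $(\alpha,\beta)\mapsto j_*(\beta)$. So for (1) it suffices to show that $j_*:H_1(\partial^+M_1)\to H_1(N\sm M_1)$ vanishes. The generators of $H_1(\partial^+M_1)$ can be represented by push-offs of the double-point loops of the $\delta_i$, and the local model at a double point---the complement of two transverse $2$-disks in $D^2\times D^2$, which deformation retracts to $T^2\times\R$---has abelian $\pi_1\cong\Z^2$. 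The double-point loop is a commutator of the two meridians in this local model, hence already trivial in $\pi_1$ there, and so null-homologous in $N\sm M_1$. Statement (2) then follows from the degree-$2$ part of Mayer-Vietoris: using $H_2(M_1)\xrightarrow{\cong}H_2(N)$, the kernel of the sum map $H_2(M_1)\oplus H_2(N\sm M_1)\to H_2(N)$ projects onto all of $H_2(N\sm M_1)$, forcing $H_2(\partial^+M_1)\to H_2(N\sm M_1)$ to be surjective.

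For (3), Seifert-van Kampen gives $\pi_1(N)=\pi_1(M_1)*_{\pi_1(\partial^+M_1)}\pi_1(N\sm M_1)$. A push-off argument (any loop in $M_1$ may be pushed onto its outer boundary) shows $\pi_1(\partial^+M_1)\to\pi_1(M_1)$ is surjective, so $\pi_1(N\sm M_1)\to\pi_1(N)$ is surjective with kernel normally generated by meridians of the codimension-$2$ pieces of $M_1$---precisely the meridians of the $\delta_i$, since the codimension-$0$ collar $M$ has no meridians. Since $\pi_1(N)$ is normally generated by meridians of $I$ by Lemma~\ref{lem:propN}(2), and these lie in $M_I\subset\partial(N\sm M_1)$, the normal generation in (3) follows. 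The main obstacle is the vanishing of $j_*$ in (1); this is the geometric heart of the argument, exploiting the abelian local $\pi_1$ around each double point, and it is precisely where the $\pi_1$-null setup of Freedman and Teichner enters.
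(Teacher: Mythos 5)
Your parts (2) and (3) are essentially fine: the Mayer--Vietoris derivation of (2) from the surjectivity of $H_2(M_1)\to H_2(N)$ is a correct alternative to the paper's excision/duality computation of $H_*(N,N\sm M_1)$, and your van Kampen argument for (3) is the same transversality idea the paper uses. But your proof of (1) has a genuine gap, located exactly where you declare the ``geometric heart'' of the argument to be. A double point loop is \emph{not} contained in a small $4$--ball around the double point: it leaves $P$ along one sheet of the immersed disk(s) and returns along the other, so it is a global loop in $W_L$. What is localized near $P$ (and carried by the Clifford torus, with abelian local $\pi_1\cong\Z^2$) are the two \emph{meridians} of the sheets; the double point loop meets the local model only in two arcs through $P$ and is dual to the solid torus $T_P$, not a commutator of meridians. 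Consequently neither of your vanishing claims holds in general: the class of a double point loop in $H_1(N)\cong H_1(W_L)\cong\Z^m$ is its vector of linking numbers with the slice disks $D_j$, and these can be arbitrary (e.g.\ a finger move on $\delta_i$ along an arc representing a meridian of $D_j$ creates two double points whose loops represent $\mu_j^{\pm1}$). In that situation $H_1(M_1)\to H_1(N)$ is nonzero, hence $j_*\neq 0$, and your reduced exact sequence is wrong; indeed, consistency with the (true) statement forces $i_*\colon H_1(\partial^+M_1)\to H_1(M_1)$ to be an isomorphism $\Z^c\to\Z^c$ rather than $k_*$ and $j_*$ to vanish.

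What actually has to be shown for (1) is that the \emph{meridians of the $\delta_j$} --- the generators of $\ker(i_*)$ in your Mayer--Vietoris sequence, equivalently of $H_2(N,N\sm M_1)\cong\Z^r$ in the paper's long exact sequence of the pair --- die in $H_1(N\sm M_1)$. This is not automatic: one proves that the intersection map $\pi\colon H_2(N)\to\Z^r$ with the $\delta_j$ is onto, using that $H_2(N)\cong H_2(M_{\hat{J}})$ is generated by capped-off Seifert surfaces for the $\hat{J}_i$ and that these intersect $\delta_j$ (whose boundary $\gamma_j$ is a meridian of $\hat{J}_j$) in the identity matrix. This is precisely where the hypothesis that $\hat{J}$ has vanishing linking numbers enters --- a hypothesis your argument never invokes, which is a warning sign. (Also a minor point: when $\delta_i$ and $\delta_j$ intersect for $i\neq j$ their images are not disjoint quotient disks, so the homotopy model for $M_1$ should be $M$ with $r$ two-cells attached along the $\mu_i$ and then $c$ arcs attached, one per intersection point; the conclusions $H_1(M_1)\cong\Z^c$ and $H_2(M_{\hat{J}})\cong H_2(M_1)$ survive, so your part (2) is unaffected. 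And the $\pi_1$-null input from \cite{FnT95} enters later, in the construction of $M_3$, not in this lemma.)
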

\begin{proof} First note that, since $M$ is a collar, $N\sm M_1 \cong N\sm \cup_i \nu\delta_i$, where $\nu\delta_i$ is a (closed) regular neighborhood of $\delta_i$. Then excision and Poincar\'e duality give  isomorphisms
$$
H_p(N,N\sm M_1) \cong H_p(N,N\sm \cup_i \nu\delta_i)\cong H_p(\cup_i\nu\delta_i,\partial'(\cup_i\nu\delta_i))\cong H^{4-p}(\cup_i\nu\delta_i,\cup_i\nu(\partial\delta_i))
$$
where we have decomposed the boundary $\partial(\cup_i\nu\delta_i)$ of the regular neighborhood into the two relevant parts. The latter groups are given by
\[
H_p(N,N\sm M_1) \cong H^{4-p}(\cup_i\nu\delta_i,\cup_i\nu(\partial\delta_i)) \cong
\begin{cases}
\Z^r &  \quad \text{ if } \quad p=2, \\
\Z^{c} &  \quad \text{ if } \quad p=3, \\
0 &  \quad \text{ else. } \quad
\end{cases}
\]
For $p=2$, generators are given by transverse disks to the $\delta_i$ and for $p=3$, each intersection point $P$ contributes a generator via a solid torus $T_P$ in a small neighborhood of $P$ (whose boundary is the well known Clifford torus and which intersects the double point loop exactly once). Thus the long exact sequence of the pair $(N,N\sm M_1)$ becomes
$$
\Z^c \to H_2(N\sm M_1)\to H_2(N)\overset{\pi}\to \mathbb{Z}^r\to H_1(N\sm M_1)\to H_1(N)\to 0,
$$
where $\pi$ is given by the algebraic intersection numbers with the various $\delta_i$. Thus the composition of $H_2(M_{\hat{J}})\cong H_2(N)$ (see ($4$) of Lemma~\ref{lem:propN}) with $\pi$ is given by the matrix  of intersection numbers of capped-off Seifert surfaces for $\hat{J}_i$ with the $\partial\delta_j=\gamma_j$. Since $\gamma_j$ is a meridian of $\hat{J}_j$, this matrix is the identity with respect to these bases (we have used that the linking numbers of $\hat{J}$ are zero). Thus $\pi$ is an isomorphism and ($1$) above follows. It also follows that $H_2(N\sm M_1)$ is generated by the Clifford tori $\partial T_P$ and since these clearly lie in $\partial^+M_1$, statement ($2$) also follows.

For ($3$), recall from property ($2$) of Lemma~\ref{lem:propN} that $\pi_1(N)$ is normally generated by the meridians of $I$. Any homotopies in $N$ may be assumed to hit $\delta_i$ transversely, so ($3$) follows immediately.

\end{proof}

Now, assuming we have constructed $M_3$ as in Lemma~\ref{lem:M_3}, we claim:

\begin{lemma}\label{lem:N'} $N'=(N\sm M_1)\cup_{\partial M_3} M_3$ satisfies the conditions of Proposition~\ref{prop:topslice}.
\end{lemma}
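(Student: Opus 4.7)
The plan is to verify each of the four conditions of Proposition~\ref{prop:topslice} for $N'$, using the decomposition $N'=(N\sm M_1)\cup_{\partial M_3}M_3$ together with Lemmas~\ref{lem:propN}, \ref{lem:M_3}, and \ref{lem:nminusm}. The boundary condition $\partial N'=M_I$ is immediate: since $M_1$ contains the collar $M$ of the $M_{\hat{J}}$-boundary of $N$, the manifold $N\sm M_1$ has boundary $M_I\amalg \partial^+M_1$, and gluing $M_3$ along $\partial M_3=\partial^+M_1$ cancels the second component.

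For the homology conditions I would apply excision to get $H_*(N',N\sm M_1)\cong H_*(M_3,\partial M_3)$ and then read off the result from the long exact sequence of the pair $(M_3,\partial M_3)$. From Lemma~\ref{lem:M_3}, $H_2(M_3)=0$ and $H_1(\partial M_3)\to H_1(M_3)\cong\Z^c$ is an isomorphism, which forces $H_i(M_3,\partial M_3)=0$ for $i\le 2$. The long exact sequence of $(N',N\sm M_1)$ then yields an isomorphism $H_1(N\sm M_1)\xrightarrow{\cong} H_1(N')$ and a surjection $H_2(N\sm M_1)\onto H_2(N')$. Combining the first with Lemma~\ref{lem:nminusm}(1) and Lemma~\ref{lem:propN}(3) gives $H_1(N')\cong H_1(M_I)\cong \Z^m$, generated by meridians of $I$, verifying condition~(3). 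For condition~(4), the proof of Lemma~\ref{lem:nminusm}(2) shows that $H_2(N\sm M_1)$ is generated by the Clifford tori $\partial T_P$, which lie in $\partial^+M_1=\partial M_3\subset M_3$; since $H_2(M_3)=0$, each such torus bounds a $3$-chain in $M_3\subset N'$, so $H_2(N')=0$.

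For the $\pi_1$ condition, Seifert--van Kampen gives $\pi_1(N')\cong \pi_1(N\sm M_1)*_{\pi_1(\partial M_3)}\pi_1(M_3)$. By Lemma~\ref{lem:nminusm}(3), $\pi_1(N\sm M_1)$ is normally generated by meridians of $I$ and meridians of the $\delta_i$, so it suffices to check that in $\pi_1(N')$ (i) the meridians of the $\delta_i$ become trivial modulo the normal closure of meridians of $I$, and (ii) the free generators of $\pi_1(M_3)$ (the double point loops, by Lemma~\ref{lem:M_3}(2)) contribute no new elements beyond this normal closure. Point (i) is exactly what the Freedman--Teichner construction of $M_3$ in \cite[pp.~547--549]{FnT95} is engineered to achieve: attaching $M_3$ fills in the sphere-bundle pieces of $\partial^+M_1$ around the $\delta_i$, so the meridians of the $\delta_i$ are null-homotopic in $M_3$. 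Point (ii) follows because each double point loop may be represented by a loop in $\partial^+M_1\subset N\sm M_1$, hence its image in $\pi_1(N')$ already lies in the image of $\pi_1(N\sm M_1)$, and so in the normal closure of meridians of $I$ and of the $\delta_i$; after (i), this is the normal closure of meridians of $I$.

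The homological steps fall out cleanly from excision and the lemmas already established. The real obstacle is the $\pi_1$ bookkeeping: verifying, via the explicit Freedman--Teichner construction of $M_3$, that the meridians of the $\delta_i$ genuinely die when $M_3$ is attached, and that each double point loop can be arranged to lie in $\partial^+M_1$ so that its image in $\pi_1(N')$ is already represented in $\pi_1(N\sm M_1)$. This is also where the hypothesis on $\bar\mu$-invariants of $J$ up to length $2c$ is ultimately used, through the Freedman--Teichner input that produces $M_3$ with the required properties.
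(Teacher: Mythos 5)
Your proof is correct and follows essentially the same route as the paper: the only cosmetic difference is that you compute $H_*(N',N\sm M_1)\cong H_*(M_3,\partial M_3)$ by excision and the long exact sequence of the pair, where the paper uses the equivalent Mayer--Vietoris sequence. The key steps --- Clifford tori carrying $H_2(N\sm M_1)$ and bounding in $M_3$, surjectivity of $\pi_1(\partial M_3)\to\pi_1(M_3)$ via double point loops, and deferring the triviality of the $\delta_i$-meridians in $\pi_1(M_3)$ to the Freedman--Teichner construction --- match the paper's argument.
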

\begin{proof} Consider the Mayer-Vietoris sequence for $N'=(N\sm M_1)\cup M_3$:
$$
H_1(\partial ^+M_1)\overset{\psi}\to H_1(N\sm M_1)\oplus H_1(M_3)\to H_1(N')\to 0.
$$
By property ($1$) of Lemma~\ref{lem:M_3}, $H_1(\partial ^+M_1)\cong H_1(M_3)$. It follows that $\psi$ is injective and that
$$
H_1(N\sm M_1)\cong H_1(N'),
$$
and so by properties ($3$) of Lemma~\ref{lem:propN} and ($1$) of Lemma~\ref{lem:nminusm}
$$
H_1(M_I)\cong H_1(N)\cong H_1(N\sm M_1)\cong H_1(N').
$$
This establishes condition ($3$) of Proposition~\ref{prop:topslice}. Moreover examining Mayer-Vietoris again:
$$
H_2(\partial ^+M_1)\to H_2(N\sm M_1)\oplus H_2(M_3)\to H_2(N')\to H_1(\partial ^+M_1)\overset{\psi}\to
$$
where $\psi$ is injective and $H_2(M_3)=0$ by property ($2$) of Lemma~\ref{lem:M_3}. Thus
$H_2(N\sm M_1)\to H_2(N')$
is surjective. Thus by property ($2$) of Lemma~\ref{lem:nminusm},
$$
H_2(\partial^+M_1)\to H_2(N\sm M_1)\to H_2(N')
$$
is surjective. Since any class in $H_2(N')$ is carried by $\partial^+M_1=\partial M_3$ and $H_2(M_3)=0$, it follows that  $H_2(N')=0$, establishing condition ($4$) of Proposition~\ref{prop:topslice}.

Finally consider $\pi_1(N')$ which, by the Seifert-Van Kampen theorem, equals
$$
\pi_1(N\sm M_1)*_{\pi_1(\partial ^+M_1)}\pi_1(M_3).
$$
The map  $\pi_1(\partial ^+M_1)\to\pi_1(M_3)$
is surjective because the double point loops come from the boundary. Therefore, $\pi_1(N\sm M_1) \to\pi_1(N')$ is also surjective. Property ($3$) of Lemma~\ref{lem:nminusm} implies that $\pi_1(N')$ is normally generated by the meridians of $I$ and the meridians of the disks $\delta_i$. But the meridians of the disks $\delta_i$ live on the Clifford tori and hence intersect trivially with the solid tori $T_P$ from Lemma~\ref{lem:nminusm}. In the construction of $M_3$ it will become clear that intersections with $T_P$ give the isomorphism of $\pi_1M_3$ with the free group on $c$ generators. Therefore, the meridians to $\delta_i$ map trivially to $\pi_1M_3$
and thus $\pi_1(N')$ is normally generated by the meridians of $I$ alone.
Thus $N'$  satisfies all the conditions of Proposition~\ref{prop:topslice}.
\end{proof}

This concludes the proof that $I$ is slice and hence the proof of Theorem~\ref{mainthm}, modulo the proof of Lemma~\ref{lem:M_3}.

\subsection{Using the proof in \cite{FnT95} to construct $M_3$}

\cite[p.~548]{FnT95} explains how to draw a ``Kirby diagram'' for the $3$-manifold $\partial^+M_1$ as follows. First consider the abstract $4$-manifold obtained from $M$ by adding $r$ $2$--handles along ($\gamma_i,f_i$)~$\subset \partial^ +M=M_{\hat{J}}$ using framings $f_i$ induced from the $\delta_i$. This is \emph{not} embedded in $N$. Let $\Sigma$ denote the resulting homology sphere obtained as the top boundary, i.e. $\Sigma$ is obtained from $M_{\hat{J}}$ by $f_i$--framed surgery on the meridians $\gamma_i$ as shown in part $A$ of Figure~\ref{fig:handleslides} (only one component of $\hat{J}$ is shown). Let $\{m_1,...,m_r\}$ denote a set of meridians for the $\{\gamma_1,...,\gamma_r\}$, also shown in part $A$ of the figure.
\begin{figure}[htbp]
\setlength{\unitlength}{1pt}
\begin{picture}(260,150)
\put(-41,2){$A$}
\put(70,2){$B$}
\put(175,2){$C$}
\put(287,2){$D$}
\put(0,30){$\hat{J}_i$}
\put(315,30){$\hat{J}_i$}
\put(-7,75){$\gamma_i$}
\put(73,83){$0$}
\put(57,75){$f_i$}
\put(0,135){$\mu_i'$}
\put(111,137){$\mu_i'$}
\put(0,105){$m_i$}
\put(-67,85){$f_i$}
\put(-80,48){$0$}
\put(30,44){$0$}
\put(254,45){$0$}
\put(142,45){$0$}
\put(-80,10){\includegraphics[height=140pt]{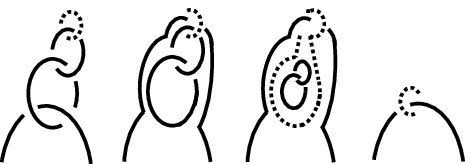}}
\end{picture}
\caption{}\label{fig:handleslides}
\end{figure}

Let $L$ denote the link $\{m_1,...,m_r\}\subset \Sigma$ and $\Sigma_L$ denote the $0$--framed surgery on $L\hookrightarrow \Sigma$. Meridians of this link are called $\mu_i'$ and are shown dashed in part $A$ of the figure. Pictures $A$ through $D$ illustrate a proof of the observation in \cite[p.~547]{FnT95} that $\Sigma_L\cong M_{\hat{J}}=\partial ^+M$ by a map that sends a meridional set $\{\mu_1',...,\mu_r'\}$ to $\{\gamma_1,...,\gamma_r\}$. This observation will be used later. This is seen by first sliding each component $\hat{J}_i$ over the corresponding $0$--framed $m_i$ which results in part $B$ of Figure~\ref{fig:handleslides}. Then the $0$--framed $m_i$ cancels with the $f_i$--framed $\gamma_i$, yielding part $D$ of the figure.

To obtain a description of $\partial ^+ M_1$ from $\Sigma$, one must take into account the self-plumbings of the $\delta_i$.
\begin{lemma}\label{lem:M_2} There exists a $4$-manifold $M_2$ with fundamental group free on $c$ generators and with $\partial M_2=\partial^+M_1$ such that
\begin{itemize}
\item [1.] The inclusion of the boundary induces an isomorphism $H_1(\partial M_2)\cong H_1(M_2)$.
\item [2.] $\pi_2M_2$ is a free $\Z[\pi_1M_2]$-module with hyperbolic intersection form.
\end{itemize}
\end{lemma}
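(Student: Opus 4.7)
The plan is to realize $M_2$ as an abstract regular neighborhood of $M\cup\bigcup_i\delta_i$, interpreted so that each of the $c$ intersection and self-intersection points of the $\delta_i$ contributes a double-point loop to the fundamental group. Concretely, I build $M_2$ from $M$ by attaching $r$ $2$-handles $h_1,\dots,h_r$ along the curves $(\gamma_i,f_i)\subset \partial^+M=M_{\hat{J}}$, with framings $f_i$ inherited from the normal framings of the $\delta_i$, and then realizing each of the $c$ double points by a plumbing (or self-plumbing) of the corresponding $2$-handles. By the very definition of $M_1$ as a regular neighborhood of $M\cup\bigcup_i\delta_i$ in $N$, this abstract plumbed handlebody reproduces $M_1$, so $\partial M_2=\partial^+M_1$ tautologically. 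This also extends the Kirby picture of Figure~\ref{fig:handleslides}(A) to a Kirby diagram for $\partial^+M_1$ by inserting additional clasps between attaching circles corresponding to each double point.

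For the fundamental group, a Van Kampen argument on the handle decomposition proceeds as follows. The $2$-handles attached along $\gamma_i=\mu_i$ kill the meridians of $\hat{J}$, which normally generate $\pi_1(M)=\pi_1(M_{\hat{J}})$ (as used in Lemma~\ref{lem:propN}), so without any plumbings one would obtain a simply connected manifold. Each of the $c$ plumbings then contributes one new generator, the double-point loop $\omega_P$ traveling from the basepoint out along one sheet, through the plumbing at $P$, and back along the other sheet. A direct handle analysis shows that no relations are imposed among the $\omega_P$, so $\pi_1(M_2)\cong F_c$. Property (1) follows from the parallel homological computation: $H_1(M_2)\cong\mathbb{Z}^c$ is generated by the $\omega_P$, and the same classes survive in $H_1(\partial^+M_1)$ as small loops encircling the double points, giving the desired isomorphism.

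For property (2), I would pass to the universal cover $\widetilde{M_2}$ and exhibit a geometric basis of $\pi_2(M_2)$ in $c$ hyperbolic pairs, one per double point $P$. Each pair consists of a sphere $S_P^{+}$ arising from the plumbing structure at $P$ (a resolved Clifford torus, respectively a Whitney-type sphere) together with a dual transverse sphere $S_P^{-}$ meeting one sheet of the plumbing algebraically once. Near $P$ the picture is the standard local model of a plumbing, with local equivariant intersection matrix equal to the standard hyperbolic form. Choosing the whiskers joining $S_P^{\pm}$ to the basepoint carefully, as in \cite{FnT95}, ensures that pairs at distinct double points and distinct deck translates have vanishing equivariant intersection. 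Summing over the $c$ double points then realizes $\pi_2(M_2)$ as a free $\mathbb{Z}[F_c]$-module of rank $2c$ with hyperbolic intersection form.

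The main obstacle is the equivariant intersection computation in the universal cover, which requires both (a) a proof that the $2c$ spheres described above actually generate all of $\pi_2(M_2)$ as a $\mathbb{Z}[F_c]$-module, and (b) careful tracking of the $F_c$-action on whisker choices to guarantee vanishing of all off-diagonal equivariant intersection numbers. Both points follow, with appropriate translation to the present plumbing setup, from the detailed analysis on pages 547--549 of \cite{FnT95}.
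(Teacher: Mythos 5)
Your construction does not yield a manifold with the required properties: an abstract regular neighborhood of $M\cup\bigcup_i\delta_i$ is essentially $M_1$ itself, which is precisely the piece the Freedman--Teichner argument throws away. Two concrete failures. First, $\partial M_1=M_{\hat{J}}\amalg\partial^+M_1$ (the collar $M$ contributes the closed component $M_{\hat{J}}$), so $\partial M_2=\partial^+M_1$ is not ``tautological'' for your $M_2$. Second, and more seriously, properties (1) and (2) fail. A Mayer--Vietoris computation for $M\cup(\text{plumbed handles})$ gives $H_2(M_2)\cong H_2(M_{\hat{J}})\cong\Z^r$, carried by the capped-off Seifert surfaces of the $\hat{J}_i$; this is not a free hyperbolic module of rank $2c$, and it could not be removed by the subsequent $\pi_1$-null surgery that is supposed to produce $M_3\simeq\bigvee^c S^1$. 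Similarly, $\pi_1$ of your $M_2$ is $\bigl(\pi_1(M_{\hat{J}})\ast\langle\omega_P\rangle\bigr)/\langle\langle\gamma_i w_i^{-1}\rangle\rangle$, where $w_i$ expresses $\partial\delta_i$ as a product of conjugates of the double-point loops $\omega_P$; this group is normally generated by the $\omega_P$, but the relations of the link group $\pi_1(M_{\hat{J}})$ do not simply vanish, so your claim that ``no relations are imposed among the $\omega_P$'' is exactly the step that fails.

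The missing idea --- and the actual content of the Kirby-calculus argument of \cite{FnT95} that the paper's proof invokes --- is to trade the bottom piece $M$ for the \emph{contractible} $4$-manifold $Z$ with $\partial Z=\Sigma$, where $\Sigma$ is the homology sphere obtained from $M_{\hat{J}}$ by $f_i$-framed surgery on the $\gamma_i$ as in Figure~\ref{fig:handleslides}. One then verifies (Figures 4.1--4.3 of \cite{FnT95}) that $\partial^+M_1$ is also obtained from $\Sigma$ by attaching $c$ $1$-handles and $2c$ $2$-handles, the $2$-handles recording the plumbing/Clifford-torus data and running homotopically trivially over the $1$-handles. Setting $M_2=Z\cup(\text{these handles})$ gives $\partial M_2=\partial^+M_1$, with $\pi_1M_2$ free of rank $c$ generated by the $1$-handles and $\pi_2M_2$ a free module of rank $2c$ carrying the hyperbolic form of the $2$-handle pairs. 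Because $Z$ is contractible, all of the homology and fundamental group of $M_{\hat{J}}$ has been discarded; that is what makes (1) and (2) true and the later surgery step possible. Your intersection-form discussion in the universal cover is in the right spirit, but it must be carried out on this handlebody over $Z$, not on a plumbing built over $M$.
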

\begin{proof}
In \cite[Figugres~4.1-4.3]{FnT95}, a ``Kirby calculus'' description of $\partial ^+ M_1$ is obtained, viewed as handles attached to the unique contractible $4$-manifold $Z$ with boundary $\Sigma$. Let $M_2$ denote the $4$-manifold given by \cite[Figure~4.3]{FnT95}. It is obtained from $Z$ by attaching $c$ 1-handles and $2c$ 2-handles to $\Sigma=\partial Z$ in a way that  $\partial M_2=\partial M_1$. Moreover, the 2-handles go homotopically trivially over the 1-handles, implying the statement for the fundamental group. Moreover, it follows that $M_2$ is homotopy equivalent to a wedge of $c$ circles and $2c$ 2-spheres, in particular $\pi_2M_2$ is a free $\Z[\pi_1M_2]$-module of rank $2c$. Finally, the figure clearly shows that the 2-handles generate a hyperbolic form on $\pi_2M_2$ which by the homology long exact sequence for the pair $(M_2,\partial M_2)$ implies $(1)$.
\end{proof}

If surgery worked over the free group, we could remove the hyperbolic form on $\pi_2M_2$ to get a manifold $M_3$ with the desired properties of Lemma~\ref{lem:M_3}. We actually just need surgery to work up to s-cobordism (rel. boundary) and this is in fact a theorem in the {\em $\pi_1$-null} case \cite{FQ90}. This condition means that the union of the images of all immersed 2-spheres representing the hyperbolic form maps trivially on $\pi_1$ into the $4$-manifold. In the case of $M_2$, the 2-spheres are made from the cores of the 2-handles, together with null-homotopies of the attaching circles in $Z$. Since $Z$ is simply connected, it suffices to keep those $2c$ null-homotopies {\em disjoint} to make the union of all 2-spheres $\pi_1$-null.

This is where our assumption on the Milnor invariants of $J$ comes in: The above 2-handles are attached to a number of parallel copies of the circles $m_i$ where the total number is precisely $2c$. We now claim that we can replace \cite[Lemma~4.2]{FnT95} by

\begin{lemma}\label{lem:ubar} Any link consisting of $2c$ untwisted parallel copies of the components $m_i$ of $L=\{m_1,...,m_r\}$ in $\Sigma$ bounds  a set of disjointly immersed disks in $Z$. Here we mean that $2c$ is the sum of the number of parallels, $c_i$, of $m_i$.
\end{lemma}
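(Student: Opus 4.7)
The plan is to reduce Lemma \ref{lem:ubar} to the classical disk-bounding principle for links with vanishing Milnor invariants applied to $\hat J$, and then port the resulting disks into $Z$.

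First, I would use the handle slide sequence of Figure \ref{fig:handleslides} read in reverse (from $D$ to $A$) to expose a cobordism from the standard $4$-ball (with $\hat J \subset \partial D^4 = S^3$) to $Z$: attach $0$-framed $2$-handles to the $\hat J_i$ (passing from $D$ to $C$), then $f_i$-framed $2$-handles to the $\gamma_i$ (passing from $C$ to $A$). The top boundary of this cobordism is $\Sigma$, and because the total surgery matrix has determinant $\pm 1$, the resulting $4$-manifold is contractible; uniqueness of the contractible topological $4$-manifold bounded by the homology sphere $\Sigma$ identifies it with $Z$. Under this identification, the curves $m_i$ in $\Sigma$ correspond (before the slides) to meridional push-offs of the $\hat J_i$ in the $S^3$-level inside $Z$. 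Hence $2c$ untwisted parallel copies of the $m_i$ in $\Sigma$ correspond to $2c$ untwisted parallel copies of the components of $\hat J$ sitting in $\partial D^4$.

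Second, I would invoke the central technical input adapted from \cite[Lemma~4.2]{FnT95}: since $\hat J$ has vanishing Milnor $\bar\mu$-invariants up to length $2c$ (a property that depends only on the closure of the string link, cf.\ Figure \ref{stringlink}), the $2c$ untwisted parallel copies of its components bound a collection of disjointly immersed disks in $D^4$. The proof is a Whitney tower construction: vanishing linking numbers allow pairwise algebraic intersection numbers of arbitrary bounding disks to be cancelled by Whitney moves, and the $\bar\mu$-invariants of each successive length control the existence of Whitney disks at the next stage. Climbing to height $2c$ produces a $\pi_1$-null Whitney tower whose top-level disks are upgraded to honest embeddings by Freedman's topological disk embedding theorem, and the tower then collapses to yield the required disjointly immersed disks in $D^4$.

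Finally, I would transport these disks forward into $Z$. The $2$-handles of the cobordism $D^4 \to Z$ are attached along a framed link (images of $\hat J_i$-pushoffs and of the $\gamma_i$) which, by general position, can be perturbed to be disjoint from the interiors of any preselected finite family of immersed $2$-disks in $D^4$. Consequently the disks constructed above remain disjointly immersed in $Z$, bounded by the parallels of the $m_i$, as required. The principal obstacle lies in the second step: the Whitney tower argument is delicate, and the precise matching of $2c$ parallels with length-$2c$ Milnor vanishing (rather than, say, $c+1$ as in the original Whitehead doubling case of \cite{FnT95}) is exactly what forces the hypothesis on $J$ in Theorem \ref{mainthm}, since each of the $c$ intersection or self-intersection points of the $\delta_i$ contributes \emph{two} parallel copies to the tower input and thus demands one additional depth of $\bar\mu$-vanishing.
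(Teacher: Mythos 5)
Your reduction has the right flavor --- the paper also ultimately trades the link $\{m_i\}\subset\Sigma$ for $\hat J\subset S^3$ --- but the mechanism you propose breaks down at the first step. The $4$-manifold obtained from $D^4$ by attaching $0$-framed $2$-handles along the $\hat J_i$ and $f_i$-framed $2$-handles along the $\gamma_i$ is simply connected with $H_2\cong\Z^{2r}$; a unimodular linking matrix only makes its \emph{boundary} a homology sphere. So that manifold is not contractible and is not $Z$, and the strategy of building disks in $D^4$ and carrying them into $Z$ collapses: it is far easier for a link to bound disjointly immersed disks in a $4$-manifold with large $H_2$ (tube into the handle cores) than in the contractible $Z$, so existence there proves nothing. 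Relatedly, the handle slides of Figure~\ref{fig:handleslides} identify the \emph{zero-surgery manifolds} $\Sigma_L\cong M_{\hat J}$ (carrying $\mu_i'$ to $\gamma_i$); they do not identify the pair $(\Sigma,\{m_i\})$ with $(S^3,\hat J)$, and your assertion that parallels of the $m_i$ ``correspond to'' parallels of the $\hat J_i$ in an $S^3$-level of $Z$ is unjustified --- these are genuinely different links in different homology spheres.

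The missing idea is exactly the paper's ``key point''. By \cite[Lemma~2.7]{FnT95} the conclusion of Lemma~\ref{lem:ubar} is equivalent to the vanishing of the $\bar\mu$-invariants of $L=\{m_1,\dots,m_r\}\subset\Sigma$ up to length $2c$, and by Dwyer's theorem this vanishing can be rephrased as the condition that $H_2(\Sigma_L)\to H_2\bigl(\pi_1(\Sigma_L)/\pi_1(\Sigma_L)_{2c-1}\bigr)$ is the trivial map --- a condition depending only on the closed $3$-manifold $\Sigma_L$ (indeed only on its fundamental group), not on the link $L$ or its ambient homology sphere. Since $\Sigma_L\cong M_{\hat J}$, this is equivalent to the corresponding vanishing for $\hat J$, which is the hypothesis of Theorem~\ref{mainthm}. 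Without some such invariance statement you cannot legitimately replace $\{m_i\}\subset\Sigma$ by $\hat J\subset S^3$. (A minor further point: the lemma asserts only disjointly \emph{immersed} disks, so no appeal to Freedman's embedding theorem is made or possible here --- the topological input enters later, in the $\pi_1$-null surgery producing $M_3$.)
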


Given this Lemma,  we can eliminate all the second homology of $M_2$ by the $\pi_1$-null surgery theorem up to s-cobordism. This gives the $4$-manifold $M_3$ that is homotopy equivalent to a wedge of circles and satisfies Lemma~\ref{lem:M_3}. The argument works exactly like in the paragraph just below Lemma 4.2 in \cite{FnT95}. This concludes the proof of Lemma~\ref{lem:M_3}, modulo the proof of Proof of Lemma~\ref{lem:ubar}.

\begin{proof}[Proof of Lemma~\ref{lem:ubar}] The Lemma is vacuously true for $c=0$, so assume $c\geq 1$. By ~\cite[Lemma~2.7]{FnT95}, the conclusion of the Lemma is equivalent to the property that
\begin{equation}\label{eq:zero}
\text{All} ~\bar{\mu}-\text{invariants of length less than or equal to} ~2c~\text{vanish for} ~L.
\end{equation}
Milnor's invariants for links in homology $3$-spheres are defined in the exactly same way as for links in $S^3$, see~\cite{FnT95}. Now let $F$ be the free group on $r$ generators and $F\to \pi_1(\Sigma\sm L)$ be the meridional map. The vanishing of the $\bar{\mu}$-invariants of $L$ is equivalent to the following three statements:
\begin{equation}\label{eq:one}
F/F_{2c+1}\cong \pi_1(\Sigma\sm L)/\pi_1(\Sigma\sm L)_{2c+1}
\end{equation}
\begin{equation}\label{eq:two}
F/F_{2c}\cong \pi_1(\Sigma_L)/\pi_1(\Sigma_L)_{2c}
\end{equation}
\begin{equation}\label{eq:2.5}
H_2(\Sigma_L)\to H_2(\pi_1(\Sigma_L)/\pi_1(\Sigma_L)_{2c-1}) ~\text{is the trivial map}.
\end{equation}
The equivalence of ~\ref{eq:zero},~\ref{eq:one} and ~\ref{eq:two} is standard for links in $S^3$ ~\cite{Mi57}. For links in general homology spheres most of this is derived in ~\cite[Section~2]{FnT95}. In particular the equivalence of ~\ref{eq:one} and ~\ref{eq:2.5} is established there using ~\cite[Theorem~1.1]{Dw75}. Now we have reached the key point: the desired property ~\ref{eq:2.5} depends only on the zero surgery $\Sigma_L$, not on $L$ itself (indeed it only depends on $\pi_1(\Sigma_L)$). At this point we only have to recall our previous observation that $\Sigma_L\cong M_{\hat{J}}$. Therefore each of the above conditions is equivalent to the requirement that the $\bar{\mu}$-invariants of length less than or equal to $2c$ vanish for $\hat{J}$. But this was the assumption of our Theorem~\ref{mainthm}.
\end{proof}

\section{Proof of Proposition~\ref{prop:generality}} \label{section:generality}

Suppose $\mathcal{L}$ is an algebraically slice boundary link of $m$ components. We give the proof in the case that $\mathcal{L}$ is a knot. The proof for a boundary link is identical. Since $\mathcal{L}$ is algebraically slice, there is a genus $r$ Seifert surface $\Sigma$ which is in ``disk-band'' form, as suggested by the left-hand side of Figure~\ref{fig:disk-bands}, where the ``$\alpha$-bands'' are untwisted and such that the link $\hat{J}$ formed by the cores of these bands has zero linking numbers. This is possible since we can choose the $\alpha$-bands to generate a metabolizer of the Seifert form.
\begin{figure}[htbp]
\setlength{\unitlength}{1pt}
\begin{picture}(300,80)

\put(-17,48){$\hat{J}_1$}
\put(70,47){$\hat{J}_2$}
\put(277,49){$\hat{J}_2$}
\put(190,48){$\hat{J}_1$}
\put(190,2){$\E$}
\put(-35,10){\includegraphics[height=50pt]{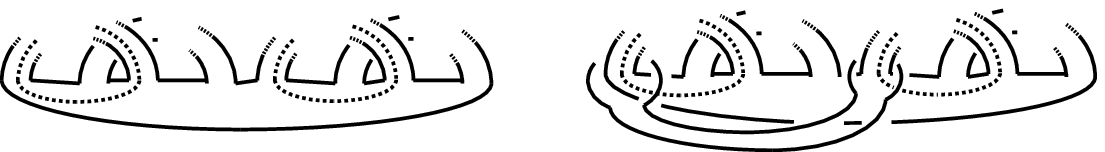}}
\end{picture}
\caption{}\label{fig:disk-bands}
\end{figure}

It is well-known that if $\hat{J}$ is (smoothly) slice then $\mathcal{L}$ is (smoothly) slice (since then the Seifert surface could be ``surgered'' to a disk using the slice disks for $\hat{J}$).
 Let $\varphi:\E\to S^3$ denote an $r$-multi-disk that hits each $\alpha$-band once transversely as suggested on the right-hand side of Figure~\ref{fig:disk-bands}. By thickening up $\E$ we arrive at the local picture shown in the left-most part of Figure~\ref{fig:stringlinks}, of a $2$-cable of the trivial $r$-string link $T$.
\begin{figure}[htbp]
\setlength{\unitlength}{1pt}
\begin{picture}(260,140)
\put(-41,-5){$\mathcal{L}$}
\put(-43,55){$T$}
\put(60,-5){$L$}
\put(165,-2){$I$}
\put(256,-2){$I$}
\put(48,55){$-J$}
\put(145,59){$-J\# J$}
\put(215,65){$\cong$}
\put(251,104){$-J$}
\put(257,47){$J$}
\put(-80,10){\includegraphics[height=140pt]{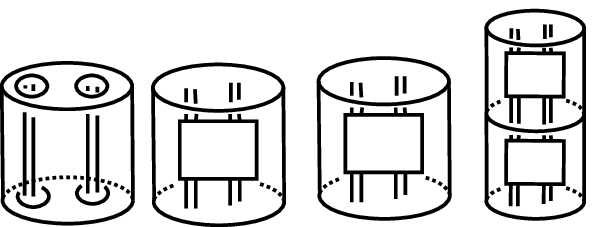}}
\end{picture}
\caption{}\label{fig:stringlinks}
\end{figure}

Let $J$ denote the $r$-string link formed by the cores of the $\alpha$-bands in the \textbf{exterior} of the thickened $\E$. Suppose we replace the $2$-cable of $T$ by the $2$-cable of the string link $-J$ as shown in the second frame of Figure~\ref{fig:stringlinks}, and call the resulting knot $L$. Then $L$ is a knot that admits a disk-band form whose $\alpha$-bands form a link that is the closure of $-J\#J$. Since the latter is a ribbon link, $L$ is a smoothly slice knot (actually a ribbon knot although this takes a little more work to show). On the other hand, suppose we replace $T$ by the $2$-cable of the string link $-J$\#$J$, as shown in the third frame of Figure~\ref{fig:stringlinks}, and denote this knot by $I$. Then $I$ is obtained from the ribbon knot $L$ by a multi-infection using the string link $J$ as indicated by the equivalence of the third and fourth frames of Figure~\ref{fig:stringlinks} (the knot in the fourth frame clearly differs from the knot in the second frame by a tangle insertion-deletion). Moreover since the string links $T$ and $-J$\#$J$ are smoothly concordant, their $2$-cables are also smoothly concordant. It follows that the knot $\mathcal{L}$ is concordant to the knot $I$ (just alter the product concordance from $\mathcal{L}$ to itself by the string link concordance). Thus, the original knot $\mathcal{L}$ is smoothly concordant to the knot $I$ which is obtained from the slice knot $L$ by infection using $J$. The curves $\{\eta_i\}$ are meridians to the $\alpha$-bands and hence lie in the exterior of a system of Seifert surfaces that exhibit $L$ as a boundary link. Then it is well known that they lie in the intersection of the terms of the lower central series of $\pi_1(S^3\sm L)$, since the Seifert surfaces can be used to construct a map to a wedge of circles that sends the $\eta_i$ to the wedge point. This concludes the proof of Proposition~\ref{prop:generality}.

\section{Examples} \label{section:examples}

In this section we give several examples of the applicability of Theorem \ref{mainthm} and Corollary \ref{corthm}. In Section~\ref{sec:intro} we explained how, given any link $\hat{J}$, Theorem \ref{mainthm} could be applied to classes of \emph{links} much more general than Whitehead doubles of $\hat{J}$. In the current section we restrict to the case that $\hat{J}$ is a knot. 

Let $L\subset S^3$ be a link and let $\eta\subset S^3\sm L$ be a closed curve which is the trivial
knot in $S^3$. The curve $\eta$ bounds an embedded disk in $S^3$ which intersects $L$ transversely
and extending this disk so that $\eta$ lies in the interior we get an embedded 1--multi--disk
$\E_\varphi$. Let $\hat{J}$ be a knot and let $J$ be a string knot such that its closure is $\hat{J}$. Recall that, in this case, all of
Milnor's $\bar{\mu}$--invariants of $\hat{J}$ are zero. We can form the infection link $I(L,J,\E_\varphi)$. It is easy to see that this link only depends on $\eta$ and
$\hJ$, and we therefore denote it by $I(L,\hat{J},\eta)$. As we have mentioned, in the literature $I(L,\hJ,\eta)$ is sometimes called the satellite link of $L$ with companion $\hJ$ and axis $\eta$.

\subsection{Infection of ribbon knots by a knot}
In this section we compare Theorem \ref{mainthm} with the two previously known slicing theorems:
\bn
\item If $K$ is a  knot with $\Delta_K(t)=1$, then $K$ is slice (\cite{Fr85}).
\item If $K$ is a knot with $\Delta_K(t)=(2t-1)(t-2)$ and if a certain non--commutative Blanchfield pairing vanishes, then
$K$ is slice (\cite{FlT05}).
\en

We first consider Figure \ref{fig:satknot}. The shaded region in Figure \ref{fig:satknot} (a) is part of a
ribbon disk $D$  for a ribbon knot $K$. Figure \ref{fig:satknot} (b) shows a curve $\eta$ which is
clearly the unknot in $S^3$.
\begin{figure}[h]
\begin{center}
\begin{tabular}{ccccc}
\includegraphics[scale=0.3]{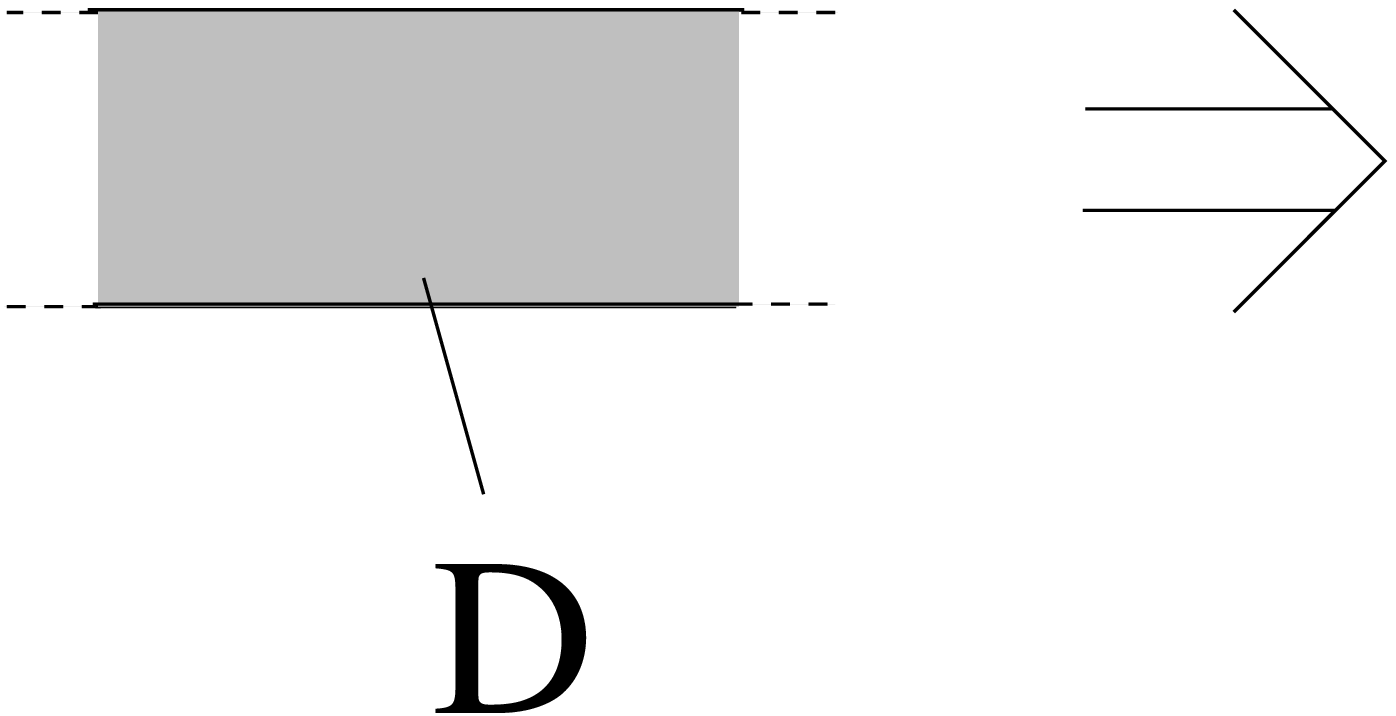}&\hspace{0.2cm}
\includegraphics[scale=0.3]{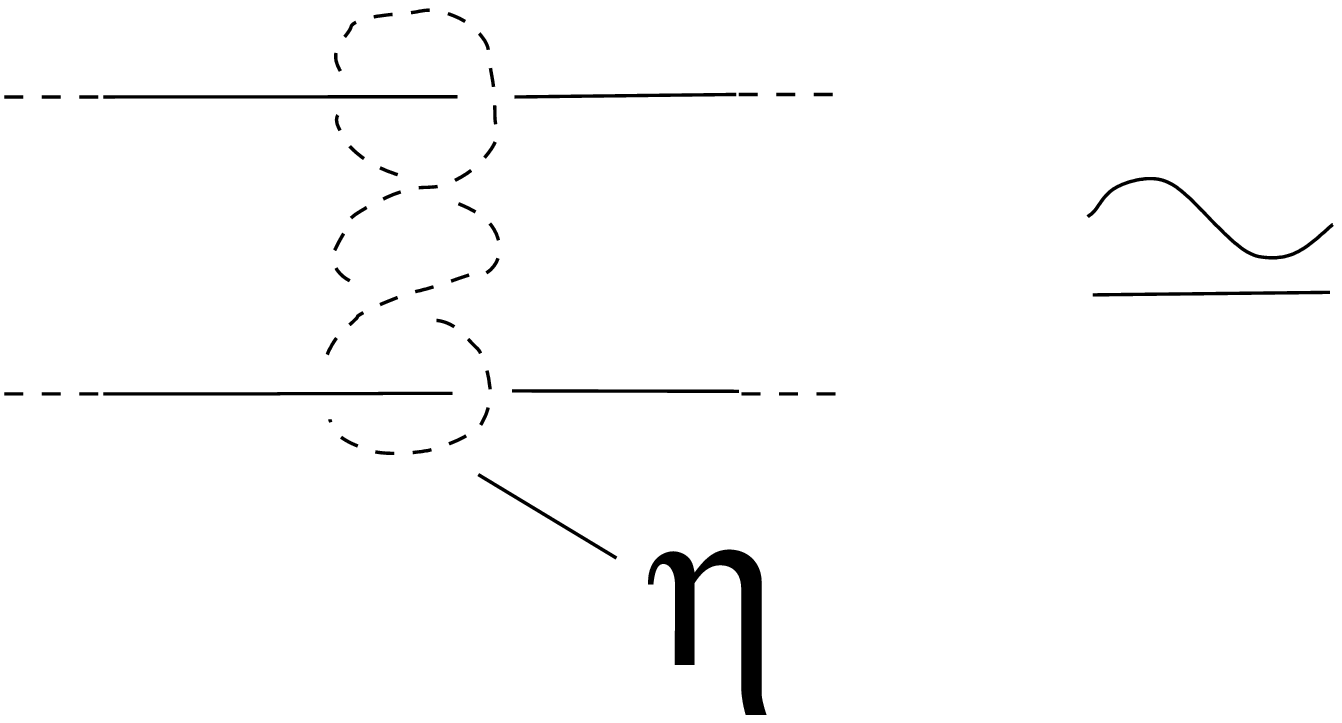}&\hspace{0.2cm}
\includegraphics[scale=0.3]{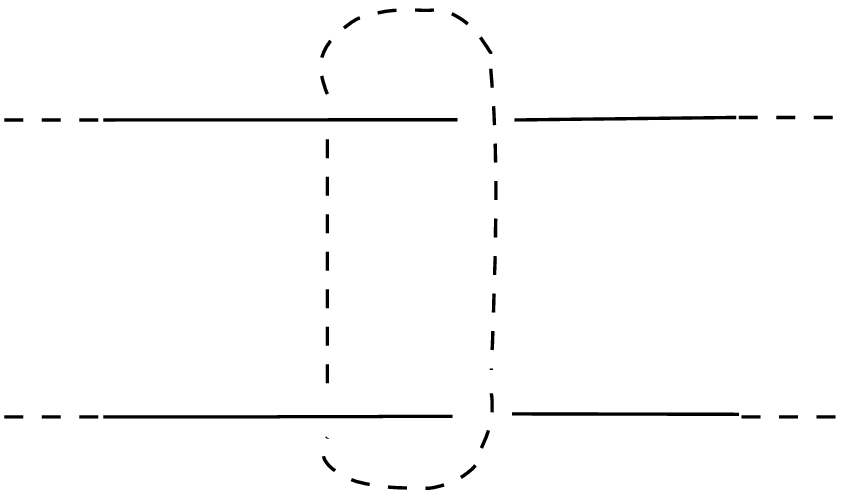}\\
(a)&(b)&(c)
\end{tabular}
\caption{Infection by a knot.} \label{fig:satknot}
\end{center}
\end{figure}
Note that $\eta$ is homotopically equivalent to a curve linking the ribbon disk $D$ once without
intersecting it (cf. Figure \ref{fig:satknot} (c)). It is therefore clear that $\eta$ is homotopically
trivial in $D^4\sm D$. It follows immediately from Corollary \ref{corthm} that $I(K,\hJ,\eta)$ is
slice for any knot $\hJ$.

(1)  Now consider Figure \ref{fig:whiteheaddouble}, it shows two isotopic
pictures for the link $\eta \cup K$.
\begin{figure}[h]
\begin{center}
\begin{tabular}{ccccc}
\includegraphics[scale=0.3]{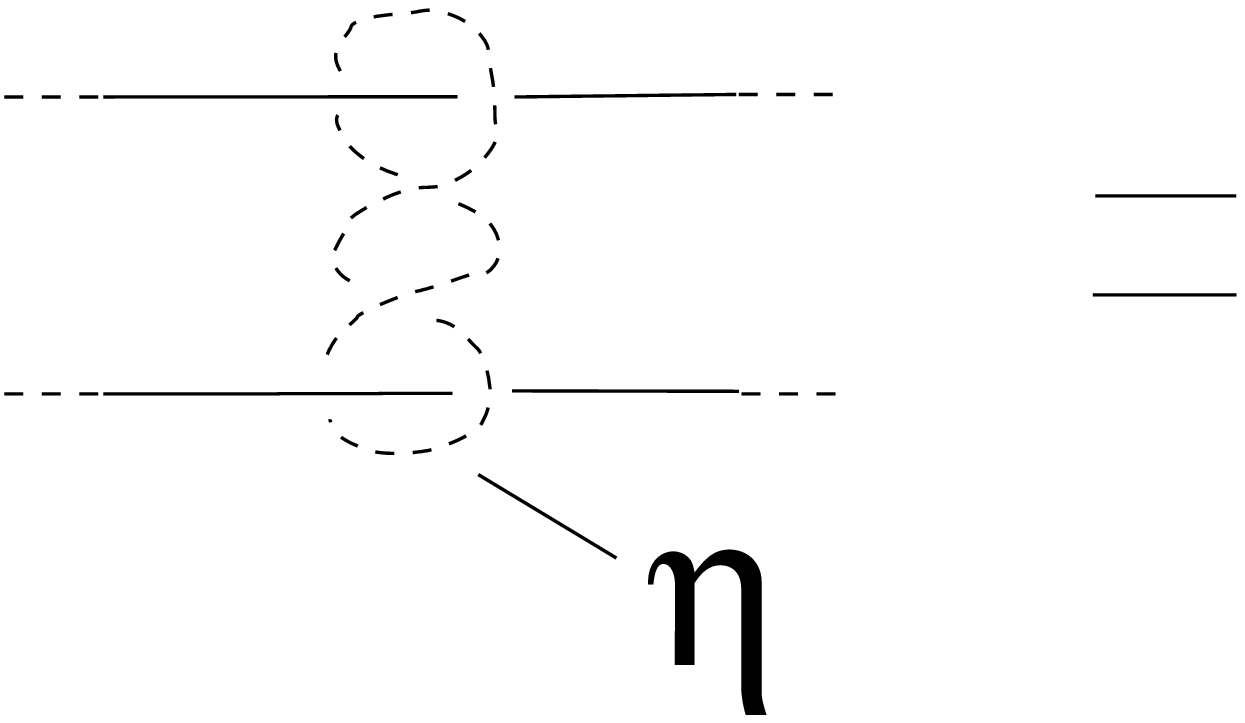}&\hspace{0.2cm}
\includegraphics[scale=0.3]{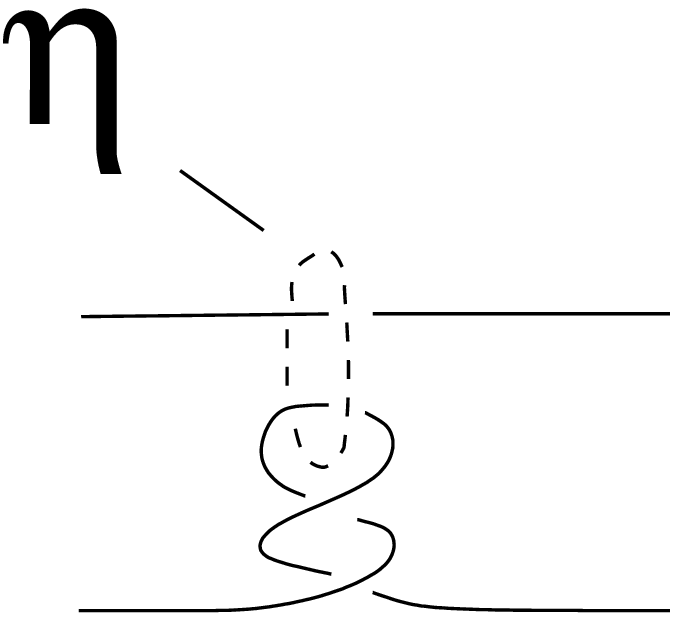}\\
(a)&(b)
\end{tabular}
\caption{Isotopy of $K\cup \eta$.} \label{fig:whiteheaddouble}
\end{center}
\end{figure}
In the special case that $D$ is the trivial ribbon disk for the unknot $K$, it follows immediately from Figure
\ref{fig:whiteheaddouble} (b)  that  $I(K,\hJ,\eta)$ is the Whitehead double of $\hat{J}$. Therefore
Corollary  \ref{corthm} gives another proof that the Whitehead double of any knot $\hat{J}$ is slice.
Note though that there exist many Alexander polynomial knots which are not Whitehead doubles (e.g. the Kinoshita--Terasaka knot), and to which Theorem \ref{mainthm} a priori does not apply.

(2) We consider Figure \ref{example2} (a). The knot $K_1$ is the ribbon knot $6_1$.
\begin{figure}[h]
\begin{center}
\begin{tabular}{ccccc}
\includegraphics[scale=0.3]{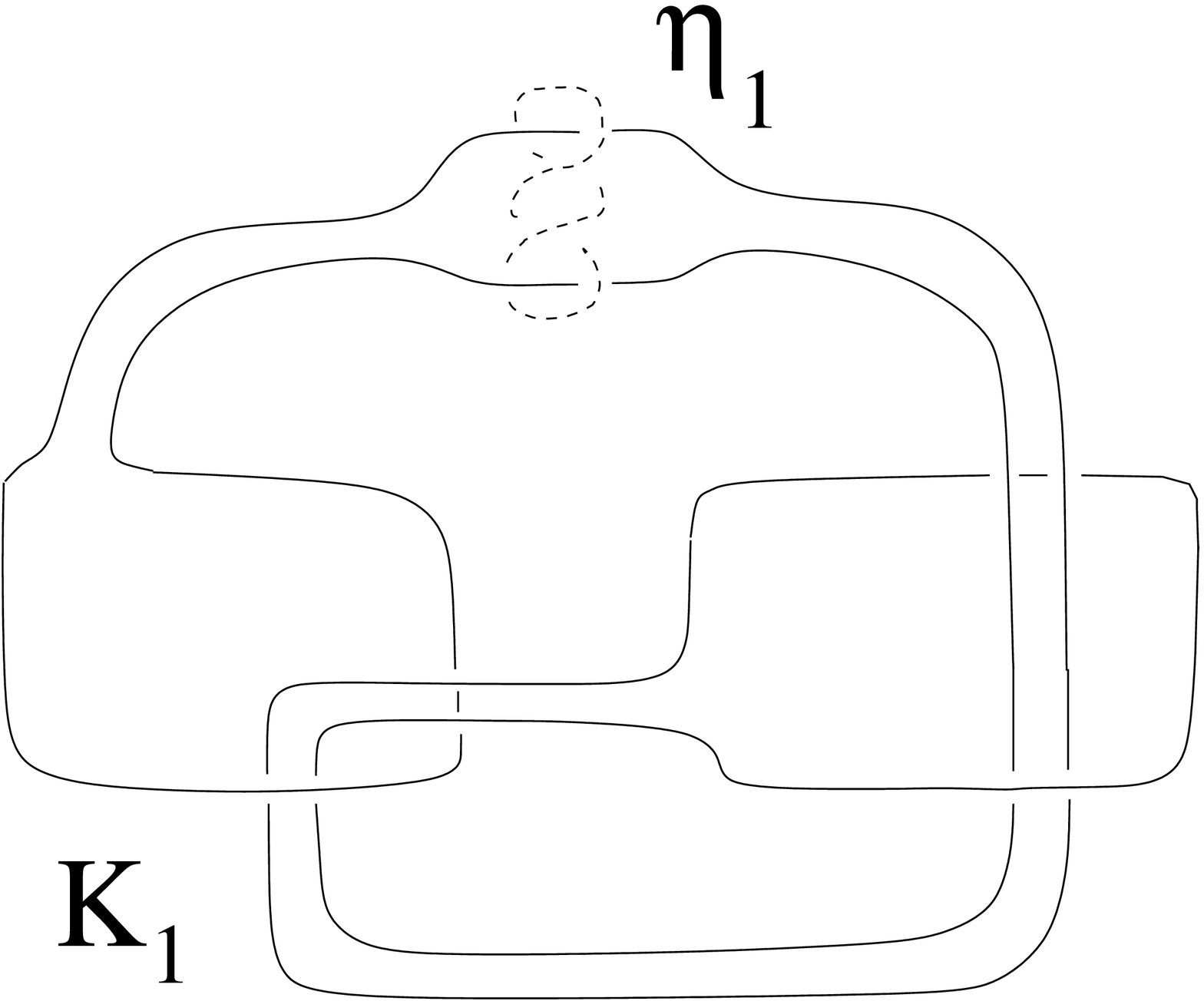}&\hspace{1.5cm}&&&
\includegraphics[scale=0.3]{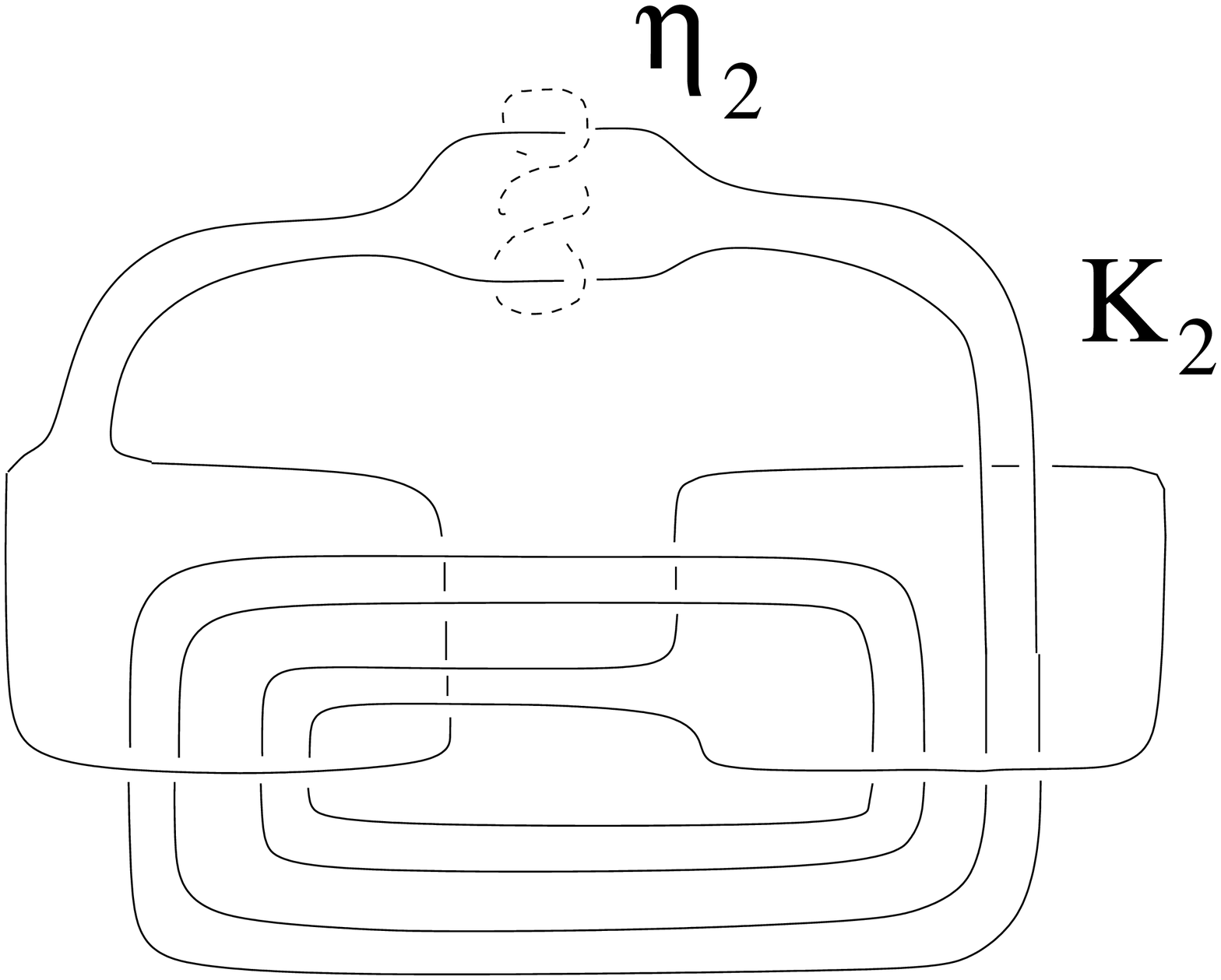}
\end{tabular}
\caption{The knots $K_1$ and $K_2$ with curves $\eta_1$ and $\eta_2$.} \label{example2}
\end{center}
\end{figure}
As we saw above, the knot  $I=I(K_1,\hJ,\eta_1)$ is slice for any knot $\hJ$. Note that $\Delta_I(t)=\Delta_{K_1}(t)=(2t-1)(t-2)$.
In fact \cite[Proposition~7.4]{FlT05} also applies to show that $I$ is slice. We point out that in \cite[Figure~1.5]{FlT05} a different and
\emph{incorrect} curve $\eta$ was chosen (cf. also the correction in \cite{FlT06}).

(3)
We now turn to Figure \ref{example2} (b). By the above discussion  $I=I(K_2,\hJ,\eta_2)$ is slice for any knot $\hJ$.
Note that $\Delta_I(t)=\Delta_{K_2}(t)=(2t-3)(2t^{-1}-3)$. In
particular neither of the two previous slicing theorems can be applied directly.

\begin{remark}
All of the knots in Figure \ref{example2}, $I(K_i,\hJ,\eta_i)$, $i=1,2$, are in fact smoothly concordant to the Whitehead double $Wh(\hat{J})$. Indeed, by ``cutting the ribbon band'' and capping off the trivial component that splits off, one constructs a smooth ribbon concordance from $I(K_i,\hJ,\eta_i)$ to the case in Figure
\ref{fig:whiteheaddouble} (b) where  the knot is the Whitehead double of $\hat{J}$. For many $\hJ$, the knot $Wh(\hat{J})$ is known not to be smoothly slice, in particular for such $\hJ$ the knots $I(K_i,\hJ,\eta_i)$ are slice but \emph{not smoothly slice}.
\end{remark}


\subsection{Satellite links}
We now turn to the study of satellite links. We first point out an array of examples from the literature that illustrate the apparent necessity of the conditions in Theorem~\ref{mainthm}. First we give examples illustrating the necessity of the conditions on the $\eta_i$. If we take $L=L_1\amalg L_2$ to be the trivial link and
$\eta$ as in Figure~\ref{fig:bingdouble}
\begin{figure}[h] \centering
\begin{tabular}{cc}
  \includegraphics[scale=0.3]{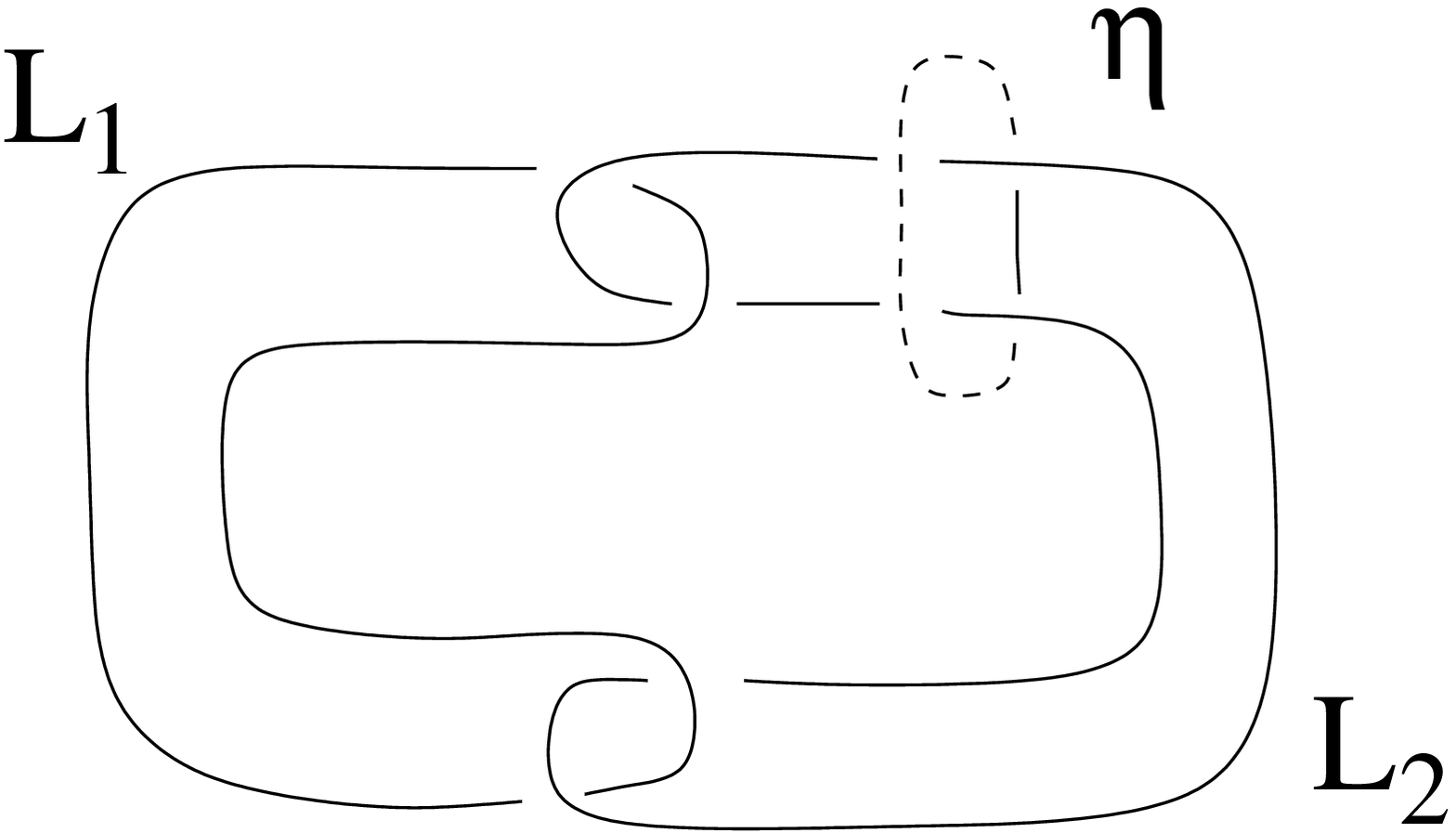}&\hspace{1cm}
\includegraphics[scale=0.3]{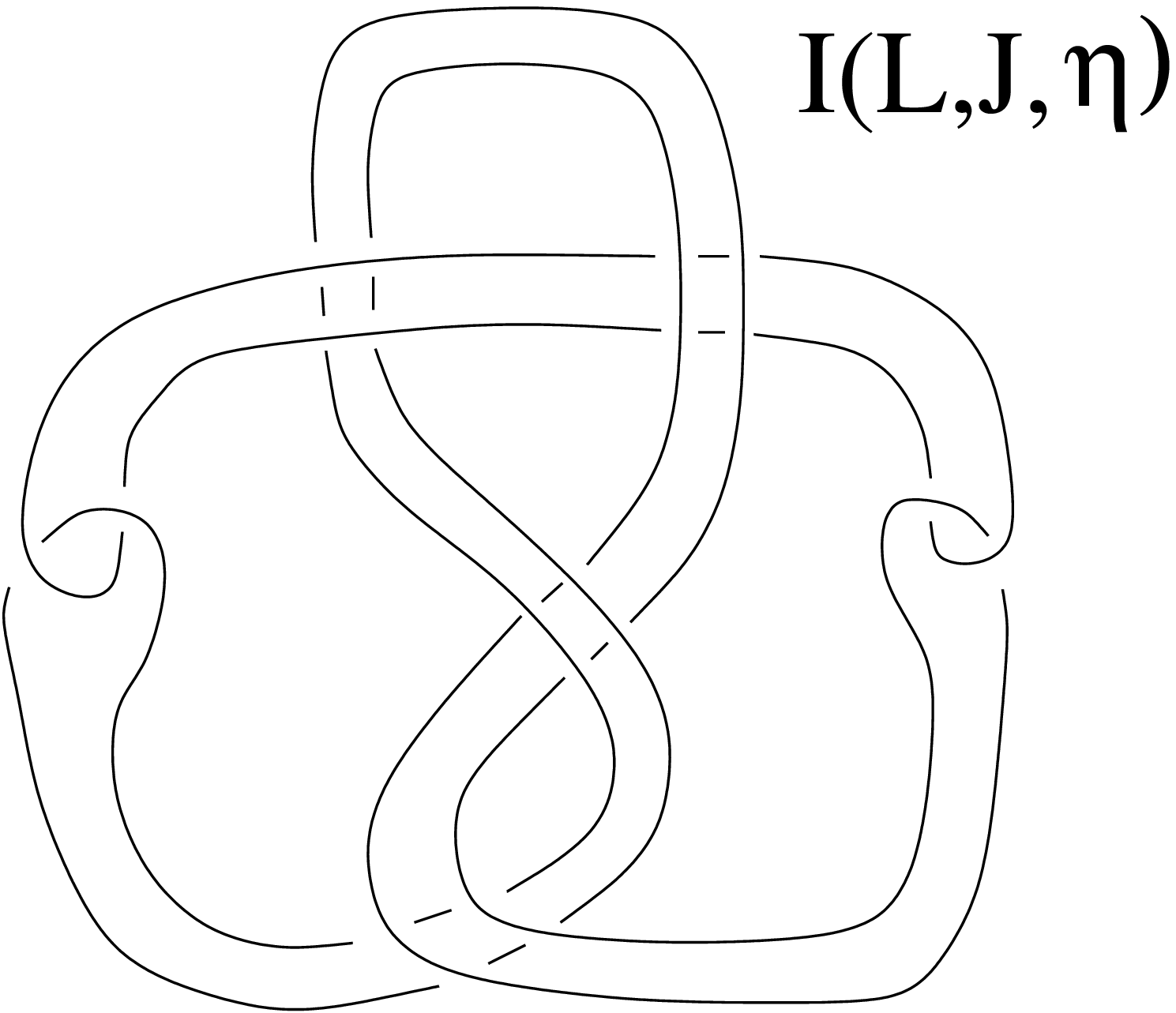}
\end{tabular}
\caption{Satellite construction with $\hJ$ the figure 8 knot, and  $L$ the trivial 2--component
link.} \label{fig:bingdouble}
\end{figure}
then it is easy to see that $\eta$ is a non--trivial element, namely it is the commutator $[x_1,x_2]$ in the free group $\pi_1(S^3\sm L_1\amalg
L_2)=\pi_1(D^4\sm D_1\amalg D_2)=\langle x_1,x_2\rangle$ where $D_1\amalg D_2$ is the obvious slice disk for the trivial
link. Hence Theorem \ref{mainthm} does not apply. Indeed, in many cases it is known that
$I(L,\hJ,\eta)$ (which is just the Bing double $\bing(\hJ)$ of a knot $\hJ$) is not slice. For example
$\bing(\hJ)$ is known to fail to be slice if $\hJ$ is not an algebraically slice knot ~\cite{CLR07}. Even if $\hJ$ is an algebraically slice knot, there are many examples where higher-order signatures obstruct $\bing(\hJ)$ from being a slice link ~\cite{CHL07}. Much more generally, if $\eta$ is taken to be \emph{any} homotopically essential (unknotted) circle in the exterior of the trivial link, then these same invariants obstruct $I(L,\hJ,\eta)$ from being slice ~\cite[Theorem~5.4, Corollary~5.6]{Ha06}~\cite[Theorem~5.8]{CHL07}~\cite{Ch07}. Included among these examples are the so-called iterated Bing doubles of $\hJ$. Therefore for these examples it seems likely that $I(L,\hJ,\eta)$ will be slice if and only if either $\eta$ is null-homotopic or $\hat{J}$ is itself a slice knot.

Even if $\eta$ is null-homotopic, the link $I(L,\hJ,\eta)$ can fail to be slice. If we take $L$  to be the trivial $2$-component link and take $\eta_1,\eta_2$ to be curves as in Figure~\ref{fig:satknot}, then, as previously observed, $I(L,\hJ,\eta)$ is $Wh(\hat{J})$. But the Whitehead double of the Hopf link is known  not to be slice (cf. \cite{Fr88}). Thus some condition on the link $\hJ$ is necessary in general.

On the other hand consider the following very general example.  The shaded region in Figure \ref{fig:satlink}
(a) is part of two ribbon disks $D_1$ and $D_2$ for a link $L=L_1\amalg L_2$. Figure
\ref{fig:satlink} (b) shows a curve $\eta$. If we view $\eta$ as a knot in $S^3$ we see  that the
kinks of $\eta$ cancel with the twists, hence $\eta\subset S^3$ is the unknot in
$S^3$.
\begin{figure}[h]
\begin{center}
\begin{tabular}{ccccc}
\includegraphics[scale=0.3]{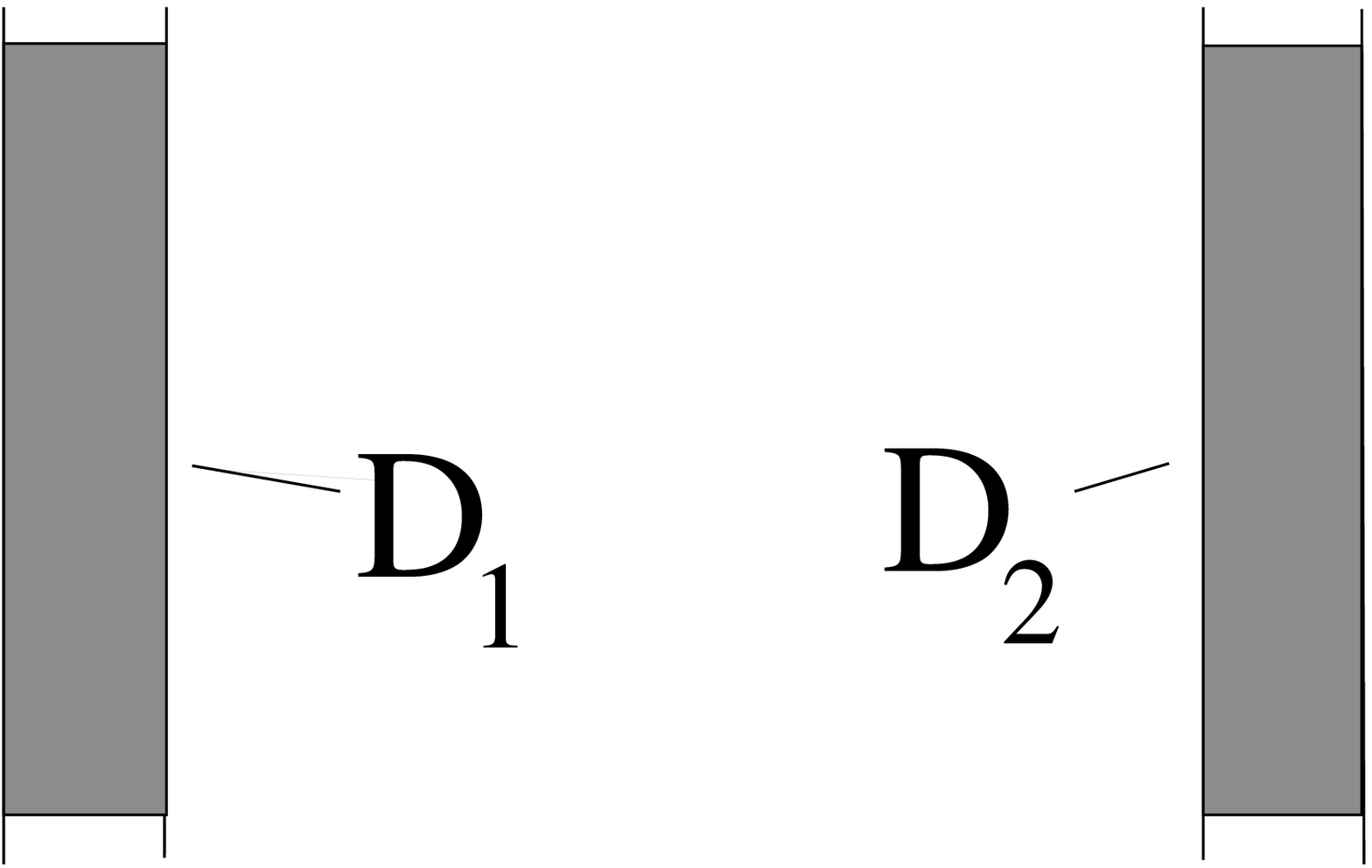}&\hspace{2cm}
\includegraphics[scale=0.3]{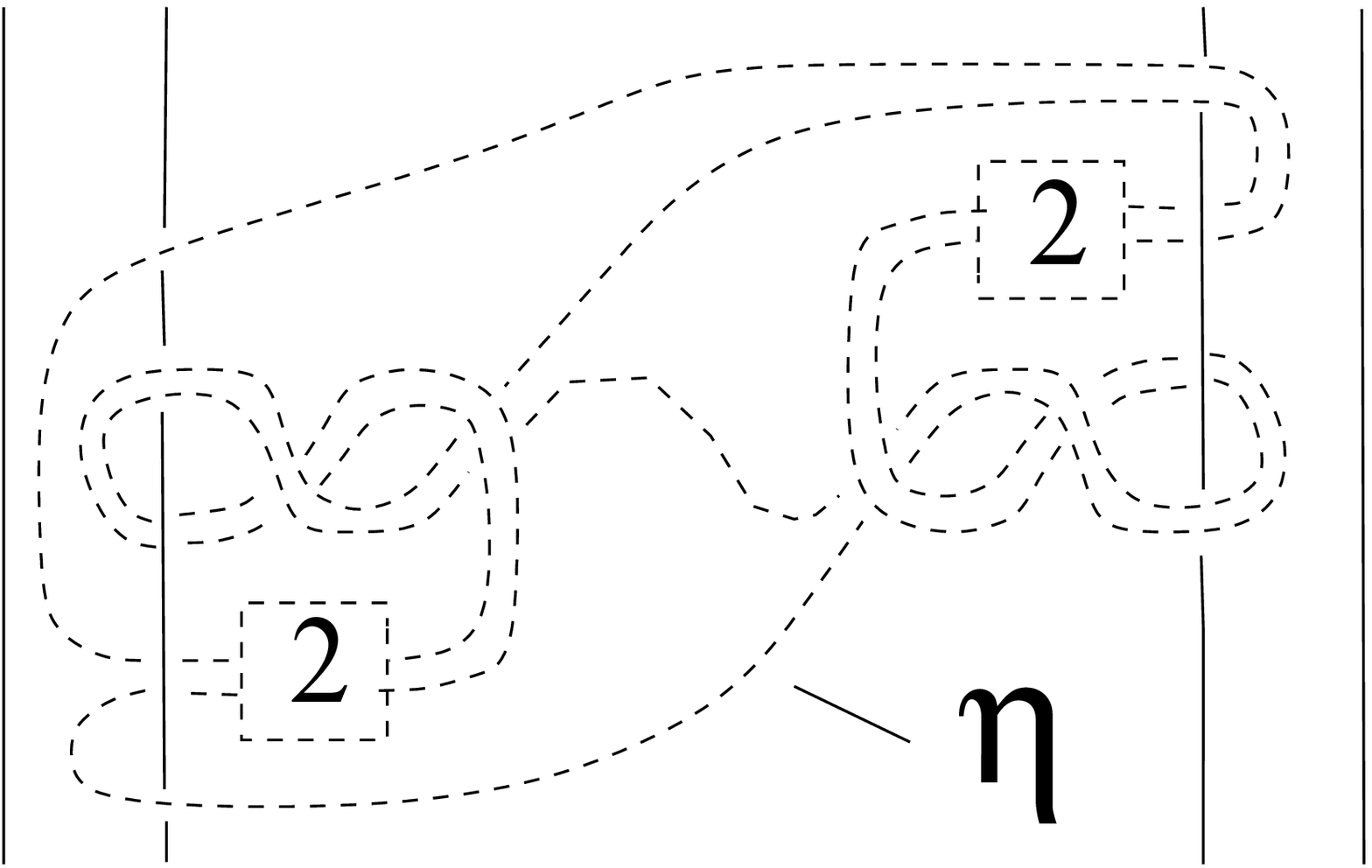}\\
(a)&(b)
\end{tabular}
\caption{Infection of a link by a knot.} \label{fig:satlink}
\end{center}
\end{figure}
Note that after resolving the self--intersections of $\eta$ we can contract $\eta$ in the
complement of $L$ to the trivial knot, in particular $\eta$ is homotopically trivial in $D^4\sm
D$. It now follows immediately from Corollary  \ref{corthm} that $I(L,\hJ,\eta)$ is topologically
slice for any knot $\hJ$. Also note that $\eta$ is unknotted in the complement of each component, in
particular if $L$ has components $L_1$ and $L_2$, then $I(L,\hJ,\eta)$ also has components $L_1$ and
$L_2$, albeit linked differently.

In the very special case that $L$ is the trivial link and $D_1$ and $D_2$ are disjointly embedded disks, then one can see that $I(L,\hJ,\eta)$ is the Bing double of the Whitehead double of $\hJ$. It is well--known that this link is in general not smoothly slice. We
refer to \cite{Ci06} to a summary of known obstructions to the Bing double of a Whitehead double
being smoothly slice.


\begin{thebibliography}{10}
\bibitem[BW05]{BW05}
A. Beliakova and S. Wehrli. {\em  Categorification of the colored Jones polynomial and Rasmussen
invariant of links}, Preprint (2005), arXiv:math.~QA/0510382.
\bibitem[Ch07]{Ch07}
J. C. Cha, {\em Link concordance, homology cobordism, and Hirzebruch-type intersection form defects from towers of iterated p-covers}, Preprint (2007), arXiv:/0705.0088,2007.
\bibitem[CLR07]{CLR07}
J. C. Cha, C. Livingston and D. Ruberman, {\em Algebraic and Heegard-Floer invariants of knots with slice Bing doubles}, to appear in Math. Proc. Cambridge Phil. Soc., Preprint (2007), http://xxx.lanl.gov/abs/math.GT/0612419.
\bibitem[Ci06]{Ci06} D. Cimasoni, {\em Slicing Bing doubles},
Algebr. Geom. Topol. 6, 2395-2415 (2006).

\bibitem[C04]{C04}
T. Cochran, {\em Noncommutative Knot Theory},
Algebraic and Geometric Topology 4, 347--398 (2004)

\bibitem[CHL06]{CHL06}
T. Cochran, S. Harvey and C. Leidy, {\em Knot concordance and Blanchfield duality}, Oberwolfach Reports 3, no. 3 (2006)

\bibitem[CHL07]{CHL07}
T. Cochran, S. Harvey and C. Leidy, {\em Knot concordance and Blanchfield duality}, Preprint (2007), http://front.math.ucdavis.edu/0705.3987


\bibitem[CGO01]{CGO01}
Tim D. Cochran, A. Gerges and K. Orr, {\em Dehn surgery equivalence relations on 3-manifolds}, Math. Proc. Cambridge Philos. Soc. 131, no.1, 97--127 (2001)




\bibitem[CO94]{CO94}
T. Cochran and K. Orr, {\em Homology Boundary Links and Blanchfield Forms: Concordance Classification and new tangle-theoretic constructions}, Topology  33, 397-427 (1994)

\bibitem[COT03]{COT03}
T. Cochran, K. Orr and  P. Teichner,
{\em Knot concordance, Whitney towers and $L\sp 2$-signatures}, Ann. of Math. (2)  157,  no. 2: 433--519 (2003)

\bibitem[COT04]{COT04}
T. Cochran, K. Orr and P. Teichner,
{\em Structure in the classical knot concordance group}, Comment. Math. Helv. 79, no. 1: 105--123  (2004)

\bibitem[D75]{Dw75} W. Dwyer, {\em Homology, Massey products and maps between groups}, J. Pure Appl. Algebra 6
(1975), no. 2, 177--190.
\bibitem[F85]{Fr85}
M. H. Freedman,
 {\em A new technique for the link slice problem},
   Inventiones Mathematicae (3), 80: 453--465 (1985)

 \bibitem[F88]{Fr88}
M. H. Freedman, {\em $\rm {Whitehead}\sb 3$ is a slice link}, Invent. Math. 94, no.1: 175-182 (1988)


\bibitem[FQ90]{FQ90}
M. Freedman, F. Quinn, {\em Topology of 4-manifolds}, Princeton Mathematical Series, 39. Princeton
University Press, Princeton, NJ (1990)
\bibitem[FT]{FnT95}
M. Freedman, P. Teichner, {\em $4$-manifold topology. II. Dwyer's filtration and surgery kernels},
Invent. Math. 122, no. 3, 531--557 (1995)
\bibitem[FrT05]{FlT05}
S. Friedl, P. Teichner, {\em New topologically slice knots}, Geometry and Topology, Volume 9
(2005) Paper no. 48, 2129--2158.
\bibitem[FrT06]{FlT06}
S. Friedl, P. Teichner, {\em Correction to `New topologically slice knots'}, Geometry and
Topology, Volume 10  (2006) 3001--3004.

\bibitem[Gi83] {Gi83}
P. M. Gilmer, {\em Slice knots in {$S\sp{3}$}}, Quart. J. Math. Oxford Ser. (2), The Quarterly Journal of Mathematics. Oxford. Second Series (34), no.135: 305--322 (1983)

\bibitem[GRS07]{GRS07}
E. Grigsby, D. Ruberman and S. Strle, {\em Knot concordance and Heegaard Floer homology invariants in branched covers}, Preprint (2007)
\bibitem[H06]{Ha06}
S. Harvey, {\em Homology Cobordism Invariants of 3-Manifolds and the
Cochran-Orr-Teichner Filtration of the Link Concordance Group},
Preprint (2006)
\bibitem[K03]{Kr03}
V. Krushkal, {\em Dwyer's filtration and topology of 4-manifolds}, Math. Res. Lett.  10 (2003),
no. 2-3, 247--251.
\bibitem[MO05]{MO05}
C. Manolescu and B. Owens, {\em  A concordance invariant from the Floer homology of double branched
covers}, Preprint (2005), arXiv:math.~GT/0508065, to appear in Inter. Math. Res. Not.
\bibitem[L05]{Li05}
C. Livingston, {\em A survey of classical knot concordance}, Handbook of knot theory, Elsevier B. V., Amsterdam, 319--347 (2005).

\bibitem[M54]{Mi54}
J. Milnor, {\em Link groups},  Ann. of Math. (2)  59,  (1954),
177--195
\bibitem[M57]{Mi57}
J. Milnor, {\em Isotopy of links}, Algebraic geometry and topology.
A symposium in honor of S. Lefschetz,  pp. 280--306 (1957)
\bibitem[OS03]{OS03}
P. Ozsv\'ath and  Z.  Szab\'o, {\em Floer homology and the four-ball genus},  Geom. Topol.  7:
615--639 (2003)
\bibitem[Ra04]{Ra04}
J. A. Rasmussen, {\em Khovanov homology and the slice genus}, preprint, math.GT/0402131 (2004), to appear in Invent. Math.

\bibitem[Ro90]{Ro90}
D. Rolfsen, {\em Knots and Links}, American Math.Soc., Providence, Rhode Island (1990)


\bibitem[T82]{Th82}
W. P. Thurston, {\em Three dimensional manifolds, Kleinian groups and hyperbolic geometry},
Bull. Amer. Math. Soc. 6 (1982)
\end{thebibliography}
\end{document}